\title{Эргодичность границы Мартина графа Юнга--Фибоначчи. I}
\author{И. А. Бочков, В. Ю. Евтушевский}
\begin{document}

\maketitle

\tableofcontents

\newpage

\newtheorem{Lemma}{Лемма}

\newtheorem{Alg}{Алгоритм}

\newtheorem{Col}{Следствие}

\newtheorem{theorem}{Теорема}

\newtheorem{Def}{Определение}

\newtheorem{Prop}{Утверждение}

\newtheorem{Problem}{Задача}

\newtheorem{Zam}{Замечание}

\newtheorem{Oboz}{Обозначение}

\newtheorem{Ex}{Пример}

\newtheorem{Nab}{Наблюдение}

\renewcommand{\labelenumi}{\arabic{enumi}$)$}
\renewcommand{\labelenumii}{\arabic{enumi}.\arabic{enumii}$^\circ$}
\renewcommand{\labelenumiii}{\arabic{enumi}.\arabic{enumii}.\arabic{enumiii}$^\circ$}

\section{Введение}

Рассмотрим слова над алфавитом $\{1,2\}$ с данной суммой цифр $n$.
Как известно, их количество есть число Фибоначчи $F_{n+1}$
($F_0=0,$ $F_1=1,$ $F_{k+2}=F_{k+1}+F_k$), и это самая распространённая
комбинаторная
интерпретация чисел Фибоначчи. Также можно думать 
о разбиениях полосы $2\times n$ на домино $1\times 2$ и $2\times 1$,
сопоставляя двойки парам горизонтальных домино, а 
единицы вертикальным домино.

Введём на этом множестве слов частичный порядок: будем говорить,
что слово $x$ предшествует слову $y$, если после удаления 
общего суффикса в слове $y$ остаётся не меньше
двоек, чем в слове $x$ остаётся цифр. 

Это действительно частичный порядок, более того, 
соответствующее частично упорядоченное множество
является модулярной
решёткой, известной как решётка Юнга -- Фибоначчи.

\begin{center}
\includegraphics[width=12cm, height=10cm]{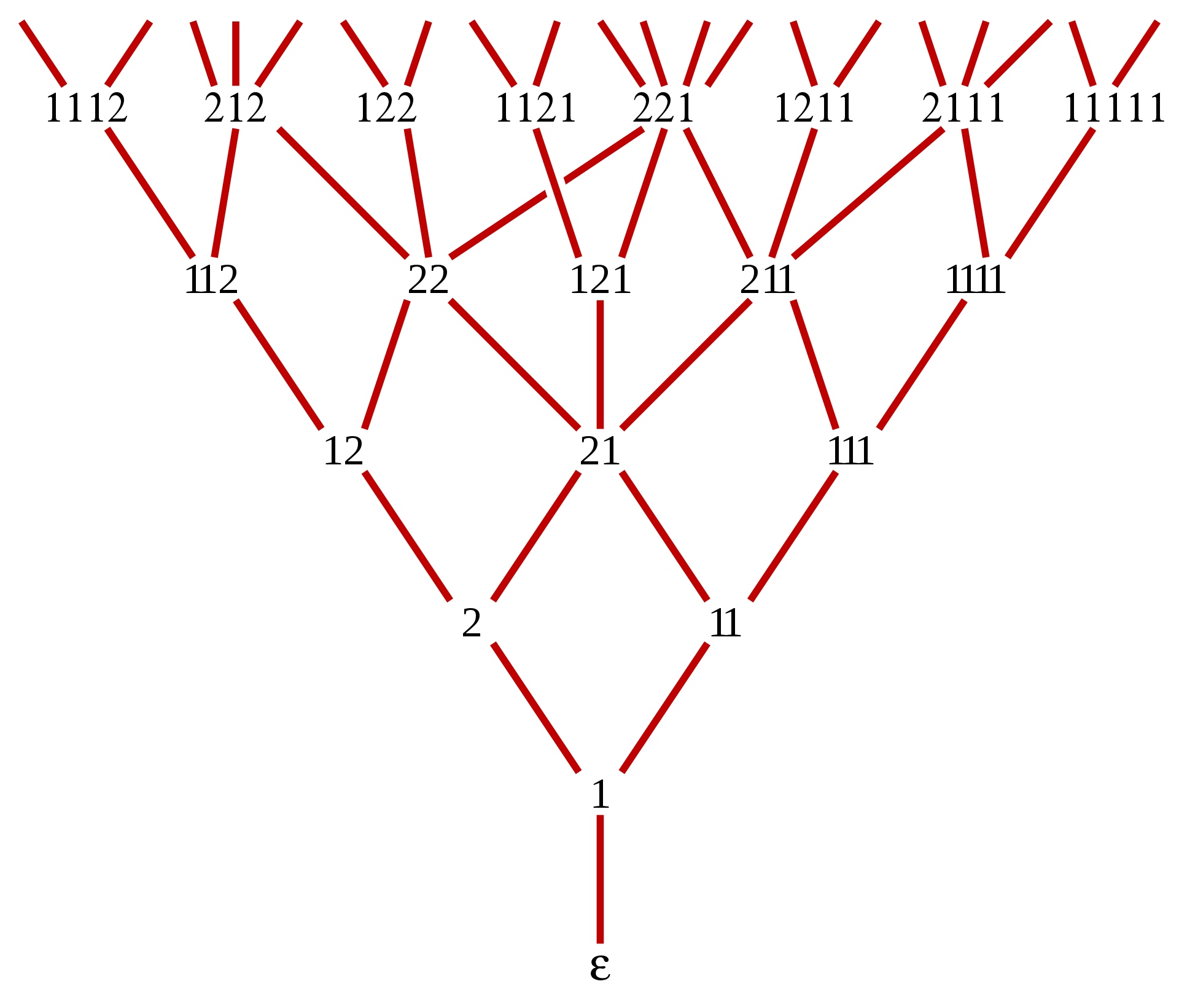}
\end{center}

Графом Юнга -- Фибоначчи (он изображён на рисунке выше) называют диаграмму Хассе этой
решётки. Это градуированный граф, который мы представляем
растущим снизу вверх начиная с пустого слова. 
Градуировкой служит функция суммы цифр. Опишем явно,
как устроены ориентированные
рёбра. Рёбра ``вверх'' из данного
слова $x$ ведут в слова, получаемые из $x$ одной из двух операций:
\begin{enumerate}
    \item  заменить самую левую единицу на двойку;

    \item вставить единицу левее чем самая левая единица.
\end{enumerate}

\renewcommand{\labelenumi}{\arabic{enumi}$^\circ$}
\renewcommand{\labelenumii}{\arabic{enumi}.\arabic{enumii}$^\circ$}
\renewcommand{\labelenumiii}{\arabic{enumi}.\arabic{enumii}.\arabic{enumiii}$^\circ$}

Этот граф помимо модулярности является $1$-дифференциальным, то есть
для каждой вершины исходящая степень на $1$ превосходит 
входящую степень.

Изучение градуированного графа Юнга -- Фибоначчи было инициировано
в 1988 году одновременно и независимо такими математиками, как
Ричард Стенли \cite{St} и Сергей Владимирович Фомин \cite{Fo}.

Причина интереса к нему в том, что существует всего две $1$-дифференциальных
модулярных решётки, вторая --- это решётка диаграмм Юнга, 
имеющая ключевое значение в теории представлений симметрической
группы. 

Центральные вопросы о градуированных графах касаются центральных мер на 
пространстве (бесконечных) путей в графе. Эта точка зрения
последовательно развивалась в работах Анатолия Моисеевича Вершика,
к недавнему обзору которого
\cite{Ver} и приводимой там литературе мы отсылаем читателя.

Среди центральных мер выделяют те, которые являются пределами 
мер, индуцированных путями в далёкие вершины --- так называемую
границу Мартина графа.

Граница пространства путей графа Юнга -- Фибоначчи изучалась в работе
Фредерика Гудмана и Сергея Васильевича Керова (2000) \cite{GK}.

Они использовали алгебраический формализм Окады \cite{Okada}.

Как следует из самого определения, 
асимптотический вопрос о границе напрямую связан с перечислительным
вопросом о числе путей между двумя вершинами графа. 
Отметим важную общую работу С. В. Фомина \cite{Fo1} о перечислении
путей в градуированных графах, в которой приводится
ряд общих тождеств и указывается связь помимо прочего с обобщением
алгоритма Робинсона -- Шенстеда -- Кнута .

Гудман и Керов обходятся без явных формул для числа путей, хотя, как
указал автору Павел Павлович Никитин, из
их рассуждений и можно их извлечь --- но количество слагаемых оказывается
экспоненциальным по длине меньшего из слов. Формула с полиномиальным числом слагаемых была получена в работе  \cite{Evtuh1}, (сокращённая версия которой опубликована как \cite{Evtuh2}). Ниже используются ссылки на оба текста. 

Среди центральных мер на пространстве путей в графе Юнга -- Фибоначчи выделяется так называемая мера Планшереля. Её эргодичность была установлена в работе Керова и Гнедина \cite{KerGned}. 

Целью настоящего цикла из двух работ является доказательство эргодичности остальных мер границы Мартина данного графа, описанных в статье \cite{GK}. Эти меры параметризуются бесконечным словом из единиц и двоек и параметром $\beta\in(0,1]$ (случай $\beta=0$ соответствует мере Планшереля). В данной статье мы сосредотачиваемся на утверждениях, соответствующих случаю $\beta=1$. Основной результат данной работы --- Теорема \ref{t1} и ей Следствия \ref{main1} и \ref{main2}. В следующей работе Следствия \ref{main1} и \ref{main2} используются ``как чёрный ящик'' для завершения доказательства гипотезы эргодичности Керова -- Гудмана.

\newpage

\section{Подготовка к доказательству основной Теоремы}

\begin{Oboz}
Пусть $\mathbb{YF}$ -- это граф Юнга -- Фибоначчи.
\end{Oboz}

\begin{Def}
Пусть $x \in \mathbb{YF}$. Тогда номером $x$ будем называть слово из единиц и двоек, соответствующее вершине $x$.
\end{Def}

\begin{Def}
Пусть $\{\alpha_i\}_{i=1}^\infty\in\{1,2\}^\infty$ -- бесконечная последовательность из единиц и двоек. Этой последовательности сопоставим ``бесконечно удалённую вершину'' графа Юнга -- Фибоначчи с номером $x=\ldots\alpha_2\alpha_1$. 
\end{Def}

\begin{Oboz}
Пусть $\mathbb{YF}_\infty$ -- это множество ``бесконечно удалённых вершин'' графа Юнга--Фибоначчи. 
\end{Oboz}

\begin{Oboz}

$\;$

\begin{itemize}
    \item Пусть $x\in \mathbb{YF},$ $n\in\mathbb{N}_0$, $\{\alpha_i\}_{i=1}^n\in\{1,2\}^n:$ номер $x$ -- это $\alpha_n\ldots\alpha_2\alpha_1$. Тогда будем писать, что $x=\alpha_n\ldots\alpha_2\alpha_1$.
    \item Пусть $x\in \mathbb{YF}_\infty$, $\{\alpha_i\}_{i=1}^\infty\in\{1,2\}^\infty:$ номер $x$ -- это $\ldots\alpha_2\alpha_1$. Тогда будем писать, что $x=\ldots\alpha_2\alpha_1$.
\end{itemize}
\end{Oboz}

\begin{Oboz}

$\;$

\begin{itemize}
    \item Пусть $x \in \mathbb{YF}$. Тогда сумму цифр в номере $x$ обозначим за $|x|$.
    \item Пусть $x \in \mathbb{YF}_\infty$. Тогда скажем, что $|x|=\infty$.
\end{itemize}
\end{Oboz}

\begin{Oboz}
Пусть $n\in\mathbb{N}_0$. Тогда
$$\mathbb{YF}_n:=\left\{v\in\mathbb{YF}:\;|v|=n\right\}.$$
\end{Oboz}

\begin{Zam}

$\;$

\begin{itemize}
    \item Пусть $x\in\mathbb{YF}$. Тогда $|x|$ -- это ранг вершины $x$ в графе Юнга -- Фибоначчи.
    \item Пусть $n\in\mathbb{N}_0$. Тогда $\mathbb{YF}_n$ -- это множество вершин графа Юнга -- Фибоначчи ранга $n$.
\end{itemize}
\end{Zam}

\begin{Oboz} 
Пусть $n,m\in\mathbb{N}_0:$ $m\le n$. Тогда
\begin{itemize}
    \item $$\overline{n}:=\{0,1,\ldots,n\};$$
    \item $$\overline{m,n}:=\{m,m+1\ldots,n\}.$$
\end{itemize}
\end{Oboz}

\begin{Def}\label{vniz}
Пусть $x,y,\{y_i\}_{i=0}^n\in \mathbb{YF}:$ $n\in\mathbb{N}_0$, 
$$y=y_0y_1y_2...y_n=x$$
-- это такой путь в графе Юнга--Фибоначчи, что $\forall i \in \overline{n}$ $$|y_i|=|y|-i.$$
Тогда путь
$$y=y_0y_1y_2...y_n=x$$
назовём $yx$-путём ``вниз'' в $\mathbb{YF}$.
\end{Def}

\begin{Zam}
В Определении \ref{vniz} $n=|y|-|x|$.
\end{Zam}

\begin{Oboz}
Пусть $x,y \in \mathbb{YF}$. Тогда количество $yx$-путей ``вниз'' в $\mathbb{YF}$ будем обозначать как $d(x,y)$.
\end{Oboz}

\begin{Zam}
Пусть $x,y \in \mathbb{YF}:$ $|y|<|x|$. Тогда
$$d(x,y)=0.$$
\end{Zam}

\begin{Oboz}
Пусть $x,y\in \mathbb{YF}$. Тогда множество всех $yx$-путей ``вниз'' обозначим за $T(x,y)$.
\end{Oboz}

\begin{Oboz}
$$T(\mathbb{YF}):=\bigcup_{\left\{(x,y)\in \mathbb{YF}^2\right\}}{T(x,y)}.$$
\end{Oboz}

\begin{Oboz}
Пусть $x,y\in \mathbb{YF},$ $t\in T(\mathbb{YF}):$ $t\in T(x,y)$. Тогда будем обозначать вершины этого пути как 
$$y=t(|y|), \; t(|y|-1),\ldots,\; t(|x|+1),\; t(|x|)=x,$$
а также считать, что если $z\in\left(\mathbb{N}_0\textbackslash\overline{|x|,|y|}\right)$, то $t(z)$ не определено.
\end{Oboz}

\begin{Zam}
Ясно, что $t(z)$ -- это вершина, через которую путь $t$ проходит на уровне $z\in\left(\mathbb{N}_0\textbackslash\overline{|x|,|y|}\right)$.
\end{Zam}

\begin{Oboz}

$\;$

\begin{itemize}
    \item Пусть $x \in \mathbb{YF}$. Тогда количество цифр в номере $x$ обозначим за $\#x$.
    \item Пусть $x \in \mathbb{YF}_\infty$. Тогда скажем, что $\#x=\infty$.
\end{itemize}
\end{Oboz}

\begin{Oboz}
Пусть $x \in \left(\mathbb{YF}\cup \mathbb{YF}_\infty\right)$. Тогда:
\begin{itemize} 
    \item Количество единиц в номере $x$ обозначим за $e(x)$;
    \item Количество двоек в номере $x$ обозначим за $d(x)$.
\end{itemize}
\end{Oboz}

\begin{Zam}
Пусть $x \in \mathbb{YF}_\infty$. Тогда:
\begin{itemize} 
    \item $d(x)$ может быть равно бесконечности;
    \item $e(x)$ может быть равно бесконечности.
\end{itemize}
\end{Zam}

\begin{Zam}
Пусть $x \in \left(\mathbb{YF}\cup \mathbb{YF}_\infty\right)$. Тогда:
\begin{itemize} 
    \item $e(x)+d(x)=\#x;$
    \item $e(x)+2d(x)=|x|;$
    \item $\#x+d(x)=|x|.$
\end{itemize}
\end{Zam}

\begin{Oboz}
$$f(x,y,z): \left\{(x,y,z)\subseteq\mathbb{YF}\times \mathbb{N}_0\times\mathbb{N}_0:\;y\in\overline{|x|},z\in\overline{\#x}\right\}\to\mathbb{R}$$
-- это функция, определённая следующим образом:

При $z=0$:
\begin{itemize}
\item Если $x\in \mathbb{YF}$ представляется в виде $x=\alpha_1...\alpha_m\alpha_{m+1}...\alpha_n$, где $|\alpha_{m+1}...\alpha_n|=y,$ $ \alpha_i \in \{1,2 \}$, то
$$f(x,y,0):=\frac{1}{(\alpha_{m+1})(\alpha_{m+1}+\alpha_{m+2})...(\alpha_{m+1}+...+\alpha_{n})}\cdot(-1)^{n-m}\cdot$$
$$\cdot\frac{1}{(\alpha_m)(\alpha_m+\alpha_{m-1})(\alpha_m+\alpha_{m-1}+\alpha_{m-2})...(\alpha_m+...+\alpha_1)}=$$
$$=\frac{1}{(-\alpha_{m+1})(-\alpha_{m+1}-\alpha_{m+2})...(-\alpha_{m+1}-...-\alpha_{n})}\cdot$$
$$\cdot\frac{1}{(\alpha_m)(\alpha_m+\alpha_{m-1})(\alpha_m+\alpha_{m-1}+\alpha_{m-2})...(\alpha_m+...+\alpha_1)};$$
\item Если $x\in \mathbb{YF}$ не представляется в виде $x=\alpha_1...\alpha_m\alpha_{m+1}...\alpha_n$, где  $|\alpha_{m+1}...\alpha_n|=y,$ $\alpha_i \in \{1,2 \}$, то
$$f(x,y,0)=0.$$
\end{itemize}

При $z>0$ (рекурсивное определение):
\begin{itemize}
    \item Если $y=0$, то
$$f(x1,0,z)=f(x1,0,0);$$
    \item Если $y>0$, то
$$f(x1,y,z)=f(x1,y,0)+f(x,y-1,z-1);$$
    \item 
\begin{equation*}
f(x2,y,z)= 
 \begin{cases}
   $$\frac{f(x11,y,z+1)}{1-y}$$ &\text {если $y \ne 1$}\\
   0 &\text{если $y=1$.}
 \end{cases}
\end{equation*}
\end{itemize}
\end{Oboz}

\begin{Ex} \label{exf1}
Значения $f(x,y,z)$ при всех возможных тройках $(x,y,z)\subseteq\mathbb{YF}\times \mathbb{N}_0\times\mathbb{N}_0:$ $x=21221,$ $y\in\overline{|x|},$ $z=0:$

\begin{itemize}
\item $$f(21221,0,0)=\frac{1}{1\cdot3\cdot5\cdot6\cdot8}=\frac{1}{720};$$
\item $$f(21221,1,0)=\frac{1}{(-1)\cdot2\cdot4\cdot5\cdot7}=-\frac{1}{280};$$
\item $$f(21221,2,0)=0;$$
\item $$f(21221,3,0)=\frac{1}{(-2)\cdot(-3)\cdot2\cdot3\cdot5}=\frac{1}{180};$$
\item $$f(21221,4,0)=0;$$
\item $$f(21221,5,0)=\frac{1}{(-2)\cdot(-4)\cdot(-5)\cdot1\cdot3}=-\frac{1}{120};$$
\item $$f(21221,6,0)=\frac{1}{(-1)\cdot(-3)\cdot(-5)\cdot(-6)\cdot2}=\frac{1}{180};$$
\item $$f(21221,7,0)=0;$$
\item $$f(21221,8,0)=\frac{1}{(-2)\cdot(-3)\cdot(-5)\cdot(-7)\cdot(-8)}=-\frac{1}{1680}.$$
\end{itemize}
\end{Ex}

\begin{Ex} \label{exf2}
Значения $f(x,y,z)$ при всех возможных тройках $(x,y,z)\subseteq\mathbb{YF}\times \mathbb{N}_0\times\mathbb{N}_0:$ $|x|\in\overline{4},$ $y\in\overline{|x|},$ $z\in\overline{\#x}:$

\begin{itemize}
    \item $x=\varepsilon$
\begin{center}
\begin{tabular}{ | m{0.5cm} || m{0.5cm} | } 
  \hline
 & $y=0$ \\
  \hline \hline
$z=0$ & $1$ \\ 
  \hline
\end{tabular}
\end{center}
    \item $x=1$
\begin{center}
\begin{tabular}{ | m{0.5cm} || m{0.5cm} | m{0.5cm} | } 
  \hline
 & $y=0$ & $y=1$ \\
  \hline \hline
$z=0$ & $1$ & $-1$ \\
  \hline
$z=1$ & $1$ & $0$ \\ 
  \hline
\end{tabular}
\end{center}
    \item $x=2$
\begin{center}
\begin{tabular}{ | m{0.5cm} || m{0.5cm} | m{0.5cm} | m{0.5cm} | } 
  \hline
 & $y=0$ & $y=1$ & $y=2$ \\
  \hline \hline
$z=0$ & $\frac{1}{2}$ & $0$ & $-\frac{1}{2}$\\
  \hline
$z=1$ & $\frac{1}{2}$ & $0$ &$-\frac{1}{2}$ \\ 
  \hline
\end{tabular}
\end{center}
    \item $x=11$
\begin{center}
\begin{tabular}{ | m{0.5cm} || m{0.5cm} | m{0.5cm} | m{0.5cm} | } 
  \hline
 & $y=0$ & $y=1$ & $y=2$ \\
  \hline \hline
$z=0$ & $\frac{1}{2}$ & $-1$ & $\frac{1}{2}$\\
  \hline
$z=1$ & $\frac{1}{2}$ & $0$ &$-\frac{1}{2}$ \\ 
  \hline
$z=2$ & $\frac{1}{2}$ & $0$ & $\frac{1}{2}$\\ 
  \hline
\end{tabular}
\end{center}
    \item $x=12$
\begin{center}
\begin{tabular}{ | m{0.5cm} || m{0.5cm} | m{0.5cm} | m{0.5cm} | m{0.5cm} | } 
  \hline
 & $y=0$ & $y=1$ & $y=2$ & $y=3$\\
  \hline \hline
$z=0$ & $\frac{1}{6}$ & $0$ & $-\frac{1}{2}$ &$\frac{1}{3}$\\
  \hline
$z=1$ & $\frac{1}{6}$ & $0$ &$-\frac{1}{2}$ &$\frac{1}{3}$\\ 
  \hline
$z=2$ & $\frac{1}{6}$ & $0$ & $-\frac{1}{2}$ &$-\frac{1}{6}$\\ 
  \hline
\end{tabular}
\end{center}
    \item $x=21$
\begin{center}
\begin{tabular}{ | m{0.5cm} || m{0.5cm} | m{0.5cm} | m{0.5cm} | m{0.5cm} | } 
  \hline
 & $y=0$ & $y=1$ & $y=2$ & $y=3$\\
  \hline \hline
$z=0$ & $\frac{1}{3}$ & $-\frac{1}{2}$ & $0$ &$\frac{1}{6}$\\
  \hline
$z=1$ & $\frac{1}{3}$ & $0$ &$0$ &$-\frac{1}{3}$\\ 
  \hline
$z=2$ & $\frac{1}{3}$ & $0$ & $0$ &$-\frac{1}{3}$\\ 
  \hline
\end{tabular}
\end{center}
    \item $x=111$
\begin{center}
\begin{tabular}{ | m{0.5cm} || m{0.5cm} | m{0.5cm} | m{0.5cm} | m{0.5cm} | } 
  \hline
 & $y=0$ & $y=1$ & $y=2$ & $y=3$\\
  \hline \hline
$z=0$ & $\frac{1}{6}$ & $-\frac{1}{2}$ & $\frac{1}{2}$ &$-\frac{1}{6}$\\
  \hline
$z=1$ & $\frac{1}{6}$ & $0$ &$-\frac{1}{2}$ &$\frac{1}{3}$\\ 
  \hline
$z=2$ & $\frac{1}{6}$ & $0$ & $\frac{1}{2}$ &$-\frac{2}{3}$\\ 
  \hline
$z=3$ & $\frac{1}{6}$ & $0$ & $\frac{1}{2}$ &$\frac{1}{3}$\\ 
  \hline
\end{tabular}
\end{center}
    \item $x=112$
\begin{center}
\begin{tabular}{ | m{0.5cm} || m{0.5cm} | m{0.5cm} | m{0.5cm} | m{0.5cm} | m{0.5cm} | } 
  \hline
 & $y=0$ & $y=1$ & $y=2$ & $y=3$ & $y=4$\\
  \hline \hline
$z=0$ & $\frac{1}{24}$ & $0$ & $-\frac{1}{4}$ &$\frac{1}{3}$ &$-\frac{1}{8}$\\
  \hline
$z=1$ & $\frac{1}{24}$ & $0$ &$-\frac{1}{4}$ &$\frac{1}{3}$ &$-\frac{1}{8}$\\ 
  \hline
$z=2$ & $\frac{1}{24}$ & $0$ & $-\frac{1}{4}$ &$-\frac{1}{6}$ &$\frac{5}{24}$\\ 
  \hline
$z=3$ & $\frac{1}{24}$ & $0$ & $-\frac{1}{4}$ &$-\frac{1}{6}$ &$-\frac{1}{8}$\\ 
  \hline
\end{tabular}
\end{center}
    \item $x=22$
\begin{center}
\begin{tabular}{ | m{0.5cm} || m{0.5cm} | m{0.5cm} | m{0.5cm} | m{0.5cm} | m{0.5cm} | } 
  \hline
 & $y=0$ & $y=1$ & $y=2$ & $y=3$ & $y=4$\\
  \hline \hline
$z=0$ & $\frac{1}{8}$ & $0$ & $-\frac{1}{4}$ & $0$ &$\frac{1}{8}$\\
  \hline
$z=1$ & $\frac{1}{8}$ & $0$ & $-\frac{1}{4}$ & $0$ &$\frac{1}{8}$\\
  \hline
$z=2$ & $\frac{1}{8}$ & $0$ & $-\frac{1}{4}$ & $0$ &$\frac{1}{8}$\\
  \hline
\end{tabular}
\end{center}
\item $x=121$
\begin{center}
\begin{tabular}{ | m{0.5cm} || m{0.5cm} | m{0.5cm} | m{0.5cm} | m{0.5cm} | m{0.5cm} | } 
  \hline
 & $y=0$ & $y=1$ & $y=2$ & $y=3$ & $y=4$\\
  \hline \hline
$z=0$ & $\frac{1}{12}$ & $-\frac{1}{6}$ & $0$ &$\frac{1}{6}$ &$-\frac{1}{12}$\\
  \hline
$z=1$ & $\frac{1}{12}$ & $0$ &$0$ &$-\frac{1}{3}$ &$\frac{1}{4}$\\ 
  \hline
$z=2$ & $\frac{1}{12}$ & $0$ & $0$ &$-\frac{1}{3}$ &$\frac{1}{4}$\\
  \hline
$z=3$ & $\frac{1}{12}$ & $0$ & $0$ &$-\frac{1}{3}$ &$-\frac{1}{4}$\\
  \hline
\end{tabular}
\end{center}
\item $x=211$
\begin{center}
\begin{tabular}{ | m{0.5cm} || m{0.5cm} | m{0.5cm} | m{0.5cm} | m{0.5cm} | m{0.5cm} | } 
  \hline
 & $y=0$ & $y=1$ & $y=2$ & $y=3$ & $y=4$\\
  \hline \hline
$z=0$ & $\frac{1}{8}$ & $-\frac{1}{3}$ & $\frac{1}{4}$ & $0$ &$-\frac{1}{24}$\\
  \hline
$z=1$ & $\frac{1}{8}$ & $0$ &$-\frac{1}{4}$ &$0$ &$\frac{1}{8}$\\ 
  \hline
$z=2$ & $\frac{1}{8}$ & $0$ & $\frac{1}{4}$ &$0$ &$-\frac{3}{8}$\\ 
  \hline
$z=3$ & $\frac{1}{8}$ & $0$ & $\frac{1}{4}$ &$0$ &$-\frac{3}{8}$\\ 
  \hline
\end{tabular}
\end{center}
\item $x=1111$
\begin{center}
\begin{tabular}{ | m{0.5cm} || m{0.5cm} | m{0.5cm} | m{0.5cm} | m{0.5cm} | m{0.5cm} | } 
  \hline
 & $y=0$ & $y=1$ & $y=2$ & $y=3$ & $y=4$\\
  \hline \hline
$z=0$ & $\frac{1}{24}$ & $-\frac{1}{6}$ & $\frac{1}{4}$ &$-\frac{1}{6}$ &$\frac{1}{24}$\\
  \hline
$z=1$ & $\frac{1}{24}$ & $0$ &$-\frac{1}{4}$ &$\frac{1}{3}$ &$-\frac{1}{8}$\\ 
  \hline
$z=2$ & $\frac{1}{24}$ & $0$ & $\frac{1}{4}$ &$-\frac{2}{3}$ &$\frac{3}{8}$\\ 
  \hline
$z=3$ & $\frac{1}{24}$ & $0$ & $\frac{1}{4}$ &$\frac{1}{3}$ &$-\frac{5}{8}$\\ 
  \hline
$z=4$ & $\frac{1}{24}$ & $0$ & $\frac{1}{4}$ &$\frac{1}{3}$ &$\frac{3}{8}$\\ 
  \hline
\end{tabular}
\end{center}
\end{itemize}
\end{Ex}

\begin{Oboz}
$$g(x,y):\left\{(x,y)\in\left(\mathbb{YF}\cup\mathbb{YF}_\infty\right)\times\mathbb{N}:\;y\le d(x)\right\} \to \mathbb{N}$$
-- это функция, определённая следующим образом:

Рассмотрим представление $x\in\mathbb{YF}$ в виде $$x=\ldots 2\underbrace{1\ldots1}_{\beta_m}2\ldots2\underbrace{1\ldots1}_{\beta_1}2\underbrace{1\ldots1}_{\beta_0}$$
и определим:
\begin{itemize}
    \item $g(x,1)=\beta_0+1;$
    \item $g(x,2)=\beta_0+\beta_1+3;$
    \item $\ldots$
    \item $g(x,m)=\beta_0+\ldots+\beta_{m-1}+2m-1;$
    \item $\ldots$.
\end{itemize}

\end{Oboz}

\begin{Oboz}
Пусть $x\in\left(\mathbb{YF}\cup\mathbb{YF}_\infty\right),$ $y \in \mathbb{YF}$. Тогда вершину графа Юнга -- Фибоначчи, номер которой -- это конкатенация номеров $x$ и $y$, обозначим за $xy$. 
\end{Oboz}

\begin{Oboz}
$\;$
\begin{itemize}
    \item Пусть $x,y \in \left(\mathbb{YF}\cup\mathbb{YF}_\infty\right)$, и при этом не выполняется то, что $x=y\in\mathbb{YF}_\infty$. Тогда максимальное $z\in\mathbb{N}_0:$ $\exists x',y'\in \left(\mathbb{YF}\cup\mathbb{YF}_\infty\right),$ $z'\in\mathbb{YF}:$ $ x=x'z'$, $y=y'z'$, $|z'|=z$, обозначим за $h'(x,y)$.
    \item Пусть $x=y\in \mathbb{YF}_\infty$. Тогда будем считать, что $h'(x,y)=\infty$.
\end{itemize}
\end{Oboz}

\begin{Oboz}
$\;$
\begin{itemize}
    \item Пусть $x,y \in \left(\mathbb{YF}\cup\mathbb{YF}_\infty\right)$, и при этом не выполняется то, что $x=y\in\mathbb{YF}_\infty$. Тогда максимальное $z\in\mathbb{N}_0:$ $\exists x',y'\in \left(\mathbb{YF}\cup\mathbb{YF}_\infty\right),$ $z'\in\mathbb{YF}:$ $x=x'z'$, $y=y'z'$, $\#z'=z$, обозначим за $h(x,y)$.
    \item Пусть $x=y\in \mathbb{YF}_\infty$. Тогда будем считать, что $h(x,y)=\infty$.
\end{itemize}
\end{Oboz}

\begin{Zam}

Пусть $x,y \in \left(\mathbb{YF}\cup\mathbb{YF}_\infty\right)$. Тогда 

\begin{itemize}
    \item $h'(x,y)$ -- это сумма цифр в самом длинном общем суффиксе номеров $x$ и $y;$
    \item $h(x,y)$ -- это количество цифр в самом длинном общем суффиксе номеров $x$ и $y;$
    \item $h'(x,y)=h'(y,x);$
    \item $h(x,y)=h(y,x)$.
\end{itemize}
\end{Zam}

\begin{theorem}[Теорема 1\cite{Evtuh1}, Теорема 1\cite{Evtuh2}] \label{evtuh}
Пусть $x,y \in \mathbb{YF}:$ $|y|\ge|x|$. Тогда
$$d(x,y)=\sum_{i=0}^{|x|}\left( {f\left(x,i,h(x,y)\right)}\prod_{j=1}^{d(y)}\left(g\left(y,j\right)-i\right)\right).$$ 
\end{theorem}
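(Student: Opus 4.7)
Естественная стратегия --- индукция по $|y| - |x|$ с использованием рекуррентности $d(x,y) = \sum_{y' \lessdot y} d(x, y')$ по покрываемым $y' \lessdot y$ в $\mathbb{YF}$. Обозначим через $R(x, y)$ правую часть доказываемого равенства.

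База индукции $|y| = |x|$ требует $d(x,y) = 1$ при $x = y$ и $d(x,y) = 0$ иначе. При $x = y$ имеем $h(x, y) = \#x$, и равенство $R(x, x) = 1$ проверяется вторичной индукцией по $\#x$: база $x = \varepsilon$ мгновенно следует из $f(\varepsilon, 0, 0) = 1$, а шаги $x = x_0 1$ и $x = x_0 2$ разбираются применением рекурсивного правила для $f(x_0 1, \cdot, \cdot)$ либо $f(x_0 2, \cdot, \cdot)$ и отслеживанием того, как меняется последовательность $g(x, \cdot)$ при добавлении последней буквы. При $x \ne y$ с $|x| = |y|$ имеем $h(x, y) < \#x$, и требуемое равенство $R(x, y) = 0$ превращается в полиномиальное тождество, поддающееся той же индуктивной схеме.

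В шаге индукции раскроем $d(x, y)$ через рекуррентность по предшественникам, заменим $d(x, y')$ на $R(x, y')$ по индукционному предположению и сведём всё к проверке $R(x, y) = \sum_{y' \lessdot y} R(x, y')$. Классифицируем предшественников $y'$ вершины $y$ по типу операции на ребре вверх: замене самой левой единицы на двойку либо вставке новой единицы левее самой левой единицы. Поскольку обе операции затрагивают лишь левую часть слова, каждый предшественник имеет с $y$ длинный общий суффикс, что контролирует как $h(x, y')$, так и последовательность $g(y', \cdot)$ через $h(x, y)$ и $g(y, \cdot)$. Вынося общий множитель $f(x, i, h(x, y))$ за сумму в тех слагаемых, где $h$ не меняется, сведём тождество к полиномиальному тождеству схематического вида
$$\prod_{j=1}^{d(y)}(g(y, j) - i) \;=\; \sum_{y' \lessdot y} \prod_{j=1}^{d(y')}(g(y', j) - i)$$
с точностью до контролируемых поправок от тех немногих предшественников, у которых общий суффикс с $x$ действительно меняется.

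Главное препятствие --- в том, что рекурсия для $f$ при $x$, оканчивающемся на $2$, идёт ``в обратную сторону'' по $\#x$: она выражает $f(x_0 2, i, z)$ через $f(x_0 11, i, z + 1)$, то есть через более длинное слово. Самый чистый способ обойти это --- переписать правило в эквивалентной форме $f(x_0 11, i, z + 1) = (1 - i)\, f(x_0 2, i, z)$ и свести случай $x$, оканчивающегося на $2$, к уже разобранному случаю $x$, оканчивающегося на $11$, где рекурсия для $f$ идёт в нужном направлении. Оставшаяся комбинаторная работа --- разбор случаев по ведущему префиксу из двоек слова $y$ и по последним буквам $x$ с аккуратным сопоставлением слагаемых с обеих сторон приведённого полиномиального тождества; именно это сопоставление, по-видимому, составит самую трудоёмкую часть доказательства.
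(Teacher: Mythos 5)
Прежде всего: в настоящей статье Теорема \ref{evtuh} вообще не доказывается --- она приводится как внешний результат со ссылкой на работы \cite{Evtuh1} и \cite{Evtuh2}, где её доказательство занимает отдельный длинный текст. Так что сравнивать ваш набросок не с чем, кроме самой формулировки, и оценивать можно только одно: является ли предложенное самостоятельным доказательством. Не является --- это план, в котором пропущены ровно те места, где сосредоточено всё содержание теоремы, и вы сами это признаёте, откладывая ``самую трудоёмкую часть'' на потом.

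Конкретные пробелы. Во-первых, центральный шаг --- тождество $R(x,y)=\sum_{y'\lessdot y}R(x,y')$ --- сведён к ``полиномиальному тождеству схематического вида с точностью до контролируемых поправок'', но ни тождество, ни поправки не выписаны и не проверены. Между тем при переходе от $y$ к предшественнику $y'$ одновременно меняются и набор $\{g(y',j)\}$ (причём по-разному: при замене одной из ведущих двоек на единицу $d(y')=d(y)$, а при удалении самой левой единицы сдвигается параметризация; в других случаях $d(y')=d(y)-1$), и величина $h(x,y')$, от которой зависит третий аргумент $f$; согласование этих изменений с рекурсивным определением $f(x,i,z)$ --- это и есть теорема, а не техническая деталь. Во-вторых, база индукции ($R(x,x)=1$ и $R(x,y)=0$ при $x\ne y$, $|x|=|y|$) лишь объявлена ``поддающейся той же схеме'': это нетривиальные тождества для знакопеременных сумм произведений, и без их проверки индукция не запускается. В-третьих, предложенный обход рекурсии для $f$ при слове, оканчивающемся на $2$ (через $f(x_011,i,z+1)=(1-i)f(x_02,i,z)$), относится к индукции по $x$, тогда как основная индукция ведётся по $y$ при фиксированном $x$; как именно эти две индукции сочетаются и в каком порядке перебираются случаи, не сказано. Сама стратегия (индукция по $|y|-|x|$ через рекуррентность $d(x,y)=\sum_{y'\lessdot y}d(x,y')$) естественна и, по-видимому, жизнеспособна, но в представленном виде это заявка на доказательство, а не доказательство.
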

\begin{Col}
Пусть $y \in \mathbb{YF}$. Тогда
$$d(\varepsilon,y)=\prod_{j=1}^{d(y)}g\left(y,j\right).$$ 
\end{Col}
\begin{proof}
$$d(\varepsilon,y)=\left(\text{По Теореме \ref{evtuh}}\right)=\sum_{i=0}^{|\varepsilon|}\left( {f\left(\varepsilon,i,h(\varepsilon,y)\right)}\prod_{j=1}^{d(y)}\left(g\left(y,j\right)-i\right)\right)=$$ 
$$=\sum_{i=0}^{0}\left( {f\left(\varepsilon,i,0\right)}\prod_{j=1}^{d(y)}\left(g\left(y,j\right)-i\right)\right)= {f\left(\varepsilon,0,0\right)}\prod_{j=1}^{d(y)}\left(g\left(y,j\right)-0\right)= \prod_{j=1}^{d(y)}g\left(y,j\right).$$ 

\end{proof}

\begin{Oboz}
Пусть $w\in \mathbb{YF}_{\infty}$, $m\in\mathbb{N}_0$. Тогда за $w_m\in\mathbb{YF}$ обозначим такую вершину графа Юнга--Фибоначчи, что $\#w_m=m$ и $\exists w'\in\mathbb{YF}_\infty:$ $w=w'w_m$.
\end{Oboz}

\begin{Zam} 

\;

\begin{itemize}
    \item Очевидно, что для любых $w\in \mathbb{YF}_{\infty}$ и $m\in\mathbb{N}_0$ такая вершина существует и однозначно определена.
    \item Ясно, что это просто вершина, номер которой -- это последние (то есть самые правые) $m$ символов номера $w$.
\end{itemize}
\end{Zam}

\begin{Oboz}
Пусть $w\in\mathbb{YF}_\infty$, $v\in\mathbb{YF}$, $m\in\mathbb{N}_0$. Тогда
    $$\mu_w(v,m):= \frac{d(\varepsilon,v)d(v,w_m)}{d(\varepsilon,w_m)}. $$
\end{Oboz}

\begin{Prop}
Пусть $w\in\mathbb{YF}_\infty,$ $v\in\mathbb{YF}$. Тогда существует предел
$$\lim_{m \to \infty}\mu_w(v,m)=\lim_{m \to \infty} \frac{d(\varepsilon,v)d(v,w_m)}{d(\varepsilon,w_m)}.$$
\end{Prop}
\begin{proof}

Ясно, что если $m\ge|v|,$ то $|w_m|\ge m\ge |v|$, а значит, по Теореме \ref{evtuh} при $v,w_m\in\mathbb{YF}$
$$\lim_{m\to\infty}\mu_{w}(v,m
)=\lim_{m\to\infty}\frac{d(\varepsilon,v)d(v,w_m)}{d(\varepsilon,w_m)}
=$$
$$=\lim_{m\to\infty}\frac{\displaystyle d(\varepsilon,v) \sum_{i=0}^{|v|}\left( {f\left(v,i,h(v,w_m)\right)}\prod_{j=1}^{d(w_m)}{\left(g\left(w_m,j\right)-i\right)}\right)}{\displaystyle\prod_{j=1}^{d(w_m)}{g\left(w_m,j\right)}}=$$
$$=d(\varepsilon,v)\lim_{m\to\infty}\left(\sum_{i=0}^{|v|}\left( {f\left(v,i,h(v,w_m)\right)}\prod_{j=1}^{d(w_m)}\frac{\left(g\left(w_m,j\right)-i\right)}{g\left(w_m,j\right)}\right)\right).$$

Ясно, что если $m\ge |v|$, то $|w_m|\ge m\ge |v|$, а значит $h(v,w_m)=h(v,w)$, из чего следует, что наше выражение равняется следующему:
$$d(\varepsilon,v)\lim_{m\to\infty}\left(\sum_{i=0}^{|v|}\left( {f\left(v,i,h(v,w)\right)}\prod_{j=1}^{d(w_m)}\frac{\left(g\left(w_m,j\right)-i\right)}{g\left(w_m,j\right)}\right)\right)=$$
$$=d(\varepsilon,v)\sum_{i=0}^{|v|}\left( {f\left(v,i,h(v,w)\right)}\cdot\lim_{m\to\infty}\prod_{j=1}^{d(w_m)}\frac{\left(g\left(w_m,j\right)-i\right)}{g\left(w_m,j\right)}\right)=$$
$$=(\text{По определению функции $g$})=$$
$$=d(\varepsilon,v)\sum_{i=0}^{|v|}\left( {f\left(v,i,h(v,w)\right)}\prod_{j=1}^{d(w)}\frac{\left(g\left(w,j\right)-i\right)}{g\left(w,j\right)}\right).$$
Из определения функции $g$ очевидно, что данное выражение определено.

\end{proof}
\begin{Oboz}
Пусть $w\in\mathbb{YF}_\infty$, $v\in\mathbb{YF}$. Тогда
    $$\mu_w(v):=\lim_{m \to \infty}\mu_w(v,m)=\lim_{m \to \infty} \frac{d(\varepsilon,v)d(v,w_m)}{d(\varepsilon,w_m)}. $$
\end{Oboz}

\begin{Zam}
Пусть $w\in\mathbb{YF}_\infty$, $v\in\mathbb{YF}$, $m,n\in\mathbb{N}_0:$ $|w_m|\ge |v|=n$. Тогда
\begin{itemize}
    \item $$ {d(\varepsilon,v)d(v,w_m)}={|\left\{t\in T(\varepsilon,w_m):\;t(n)=v\right\}|} ;$$

    \item $$\mu_w(v,m)= \frac{d(\varepsilon,v)d(v,w_m)}{d(\varepsilon,w_m)}=\frac{|\left\{t\in T(\varepsilon,w_m):\;t(n)=v\right\}|}{|\left\{T(\varepsilon,w_m)\right\}|} .$$
    
\end{itemize}
\end{Zam}

\begin{Prop}\label{mera}
Пусть $w\in\mathbb{YF}_\infty$, $m,n\in\mathbb{N}_0:$ $|w_m|\ge n$. Тогда
$$\sum_{v\in\mathbb{YF}_n}\mu_w(v,m)= \sum_{v\in\mathbb{YF}_n}\frac{d(\varepsilon,v)d(v,w_m)}{d(\varepsilon,w_m)}=1. $$
\end{Prop}
\begin{proof}
$$\sum_{v\in\mathbb{YF}_n}\mu_w(v,m)= \sum_{v\in\mathbb{YF}_n}\frac{d(\varepsilon,v)d(v,w_m)}{d(\varepsilon,w_m)}=(\text{так как $|w_m|\ge n$})=\sum_{v\in\mathbb{YF}_n}\frac{|\left\{t\in T(\varepsilon,w_m):\;t(n)=v\right\}|}{|\left\{T(\varepsilon,w_m)\right\}|}=$$
$$=\frac{\displaystyle\sum_{v\in\mathbb{YF}_n}\left|\left\{t\in T(\varepsilon,w_m):\;t(n)=v\right\}\right|}{\displaystyle|\left\{T(\varepsilon,w_m)\right\}|} =\frac{\displaystyle|\left\{T(\varepsilon,w_m)\right\}|}{\displaystyle|\left\{T(\varepsilon,w_m)\right\}|}=1. $$
\end{proof}

\begin{Col} \label{mera1}
Пусть $w\in\mathbb{YF}_\infty$, $n\in\mathbb{N}_0$. Тогда
$$\sum_{v\in\mathbb{YF}_n}\mu_w(v)=1.$$
\end{Col}
\begin{proof}
$$\sum_{v\in\mathbb{YF}_n}\mu_w(v)=\sum_{v\in\mathbb{YF}_n}\left(\lim_{m \to \infty} \frac{d(\varepsilon,v)d(v,w_m)}{d(\varepsilon,w_m)}\right)=\lim_{m \to \infty}\left(\sum_{v\in\mathbb{YF}_n}\left( \frac{d(\varepsilon,v)d(v,w_m)}{d(\varepsilon,w_m)}\right)\right)=$$
$$=(\text{Так как если $m\ge n$, то $|w_m|\ge m\ge n$ })=1.$$
\end{proof}

\begin{Oboz}
Пусть $w\in \left(\mathbb{YF}\cup\mathbb{YF}_\infty\right)$. Тогда 
$$\pi(w):=\prod_{i:\;g(w,i)> 1  } \frac{g(w,i)-1}{g(w,i)}.$$
\end{Oboz}

\begin{Zam} \label{promezhutok}
Ясно, что 
\begin{itemize}
    \item Если $w\in\mathbb{YF}$, то 
$$\pi(w)\in(0,1];$$
    \item Если $w\in\mathbb{YF}_\infty$, то 
$$\pi(w)\in[0,1].$$
\end{itemize}
\end{Zam}

\begin{Oboz}
$$\mathbb{YF}_\infty^+:=\{w\in\mathbb{YF}_\infty:\;\pi(w)>0\}.$$
\end{Oboz}
\begin{Zam}
Ясно, что
$$\mathbb{YF}_\infty^+=\{w\in\mathbb{YF}_\infty:\;\pi(w)\in(0,1)\}.$$
\end{Zam}

\newpage

\section{Доказательство основной Теоремы}

\begin{Oboz}
Пусть $w\in\mathbb{YF}_\infty$, $n\in\mathbb{N}_0$, $\delta\in(0,1)$. Тогда
\begin{itemize}
    \item $$P(w,n,\delta):=\left\{v\in\mathbb{YF}_n:\; {h'(v,w)}\ge (1 - \delta)n\right\};$$
    \item $$ \overline{P}(w,n,\delta):=\left\{v\in\mathbb{YF}_n:\; {h'(v,w)}< (1 - \delta)n\right\}.$$
\end{itemize}
\end{Oboz}

\renewcommand{\labelenumi}{\arabic{enumi}$)$}
\renewcommand{\labelenumii}{\arabic{enumi}.\arabic{enumii}$^\circ$}
\renewcommand{\labelenumiii}{\arabic{enumi}.\arabic{enumii}.\arabic{enumiii}$^\circ$}

\begin{theorem} \label{t1}
Пусть $w\in\mathbb{YF}_\infty^+,$ $\delta\in(0,1)$. Тогда
\begin{enumerate}
    \item $$\lim_{n \to \infty}{\sum_{v\in \overline{P}(w,n,\delta)}\mu_w(v)=0};$$
    \item $$\lim_{n \to \infty}{\sum_{v\in P(w,n,\delta)}\mu_w(v)=1}.$$
\end{enumerate}
\end{theorem}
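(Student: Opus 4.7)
The plan is to prove part (1); part (2) then follows immediately from Corollary \ref{mera1}, since $\sum_{v\in\mathbb{YF}_n}\mu_w(v)=1$ and $\mathbb{YF}_n=P(w,n,\delta)\sqcup\overline{P}(w,n,\delta)$. So the task reduces to showing $\sum_{v\in\overline{P}(w,n,\delta)}\mu_w(v)\to 0$ as $n\to\infty$. The starting point is the closed form
\[
\mu_w(v)\;=\;d(\varepsilon,v)\sum_{i=0}^{|v|}f(v,i,h(v,w))\prod_{j=1}^{d(w)}\frac{g(w,j)-i}{g(w,j)}
\]
from the Proposition just before the theorem, combined with $d(\varepsilon,v)=\prod_{j=1}^{d(v)}g(v,j)$.

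My first move is to regroup the vertices $v\in\overline{P}(w,n,\delta)$ according to their longest common suffix $u$ with $w$. Since $w$ is fixed, each such $u$ is of the form $w_k$ for some $k\in\mathbb{N}_0$; the condition $v\in\overline{P}(w,n,\delta)$ reads $|w_k|<(1-\delta)n$, and a vertex having this exact common suffix has the form $v=v'w_k$ with $v'\in\mathbb{YF}_{n-|w_k|}$, subject (when $v'\neq\varepsilon$) to the constraint that the last character of $v'$ does not equal $\alpha_{k+1}$, so that $w_k$ is indeed the \emph{longest} common suffix. The identities $g(v'w_k,j)=g(w,j)$ for $j\le d(w_k)$ and $g(v'w_k,d(w_k)+j)=g(v',j)+|w_k|$ for $j\ge 1$, which follow directly from the $\beta$-decomposition defining $g$, yield the factorization $d(\varepsilon,v'w_k)=d(\varepsilon,w_k)\prod_{j=1}^{d(v')}\bigl(g(v',j)+|w_k|\bigr)$. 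The recursion for $f$ — namely $f(x1,y,z)=f(x1,y,0)+f(x,y-1,z-1)$ for $y>0$, together with the reduction of $f(x2,y,z)$ to $f(x11,y,z+1)/(1-y)$ — unfolds $f(v,i,h(v,w))$ into a finite linear combination of $f$-values at $z=0$ on explicit related words. The hypothesis $w\in\mathbb{YF}_\infty^+$, equivalent to $\sum_j 1/g(w,j)<\infty$, guarantees that the infinite tail products $\prod_{j>d(w_k)}(g(w,j)-i)/g(w,j)$ converge (which is in any case already needed for $\mu_w$ to be well-defined) and tend to $1$ as $k\to\infty$ for each fixed $i$.

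The hope is that, after substitution, the inner sum over admissible $v'\in\mathbb{YF}_{n-|w_k|}$ collapses via a specific combinatorial identity — in the spirit of Fomin's identities for path counts in graded graphs — leaving a sum over $k$ with $|w_k|<(1-\delta)n$ of explicit terms that vanishes as $n\to\infty$. The main obstacle is precisely the cancellation in the alternating sum over $i$: the sign $(-1)^{n-m}$ in $f(\cdot,\cdot,0)$ and the factors $g(w,j)-i$ (which become negative once $i>g(w,j)$, and $g(w,j)$ grows only linearly in $j$) imply that any bound by absolute values would be hopelessly loose. The technical heart of the proof must therefore be an identity performing the cancellation explicitly before estimates are taken; identifying and verifying that identity — likely through a generating-function manipulation or a sign-reversing involution specific to the Young--Fibonacci combinatorics — is where I would expect the bulk of the work to lie.
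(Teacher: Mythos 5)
Your reduction of part (2) to part (1) via Corollary \ref{mera1} matches the paper. But for part (1) your proposal stops exactly where the proof has to begin: you write that ``the technical heart of the proof must therefore be an identity performing the cancellation explicitly before estimates are taken'' and that identifying this identity is ``where I would expect the bulk of the work to lie''. That identity is never produced, so what you have is a description of an obstacle, not a proof. The obstacle is real --- the sum $\sum_i f(v,i,h(v,w))\prod_j(g(w,j)-i)/g(w,j)$ has massive sign cancellation, and no bound by absolute values can work --- and there is no indication in your sketch of how the sum over $v'\in\mathbb{YF}_{n-|w_k|}$ with a prescribed longest common suffix would actually collapse.

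The paper's proof shows that the right move is to sidestep the alternating formula entirely and work, for finite $m$, with the manifestly non-negative quantity $\frac{d(\varepsilon,v)d(v,w_m)}{d(\varepsilon,w_m)}$, i.e.\ with a fraction of honest path counts. The key combinatorial observation (Lemma \ref{ass}) is that any path down from $w_m$ through a vertex $v\in\overline{P}(w,n,\delta)$ must, at the last level where the long suffix $y$ of $w_m$ is still intact, have the form $2^u y$ with $u>\delta n/2$: destroying a digit of the suffix requires everything to its left to be a block of twos. This gives the bound by $\sum_{(a,b,k)}d(\varepsilon,2^kb)\,d(2^k,a)/d(\varepsilon,w_m)$. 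The hypothesis $\pi(w)>0$ enters only through $\sum_i 1/g(w,i)<\infty$ (Proposition \ref{inf}), which makes $d(2^k,a)/d(\varepsilon,a1^{|b|})$ decay like $s^k/k!$ (Lemma \ref{rap}); this factorial decay beats the $k^k$-type growth of $d(\varepsilon,2^k1^{|b|})$, yielding a geometric bound $(2/3)^k$ summed over $k>\delta n/2$. None of this requires any cancellation identity. If you want to salvage your route you would need to actually exhibit the sign-reversing involution or generating-function identity you allude to, and nothing in the paper suggests one is available in a usable form; as written, your argument has a gap at its central step.
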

\begin{proof}

\renewcommand{\labelenumi}{\arabic{enumi}$^\circ$}
\renewcommand{\labelenumii}{\arabic{enumi}.\arabic{enumii}$^\circ$}
\renewcommand{\labelenumiii}{\arabic{enumi}.\arabic{enumii}.\arabic{enumiii}$^\circ$}

\begin{Oboz}
Пусть $w\in\mathbb{YF}_\infty$, $\delta\in(0,1)$, $n,m\in\mathbb{N}_0:$ $m\ge n$. Тогда
$$\overline{P}(w,n,\delta,m):=\left\{(a,b,k)\in\mathbb{YF}\times\mathbb{YF}\times \mathbb{N}:\; ab=w_m, \; |b|< (1-\delta)n,\; k>\frac{\delta n}{2}\right\}.$$
    
\end{Oboz}

\begin{Oboz}
Пусть $v\in\mathbb{YF},$ $n\in\mathbb{N}_0$. Тогда 
$$v^n:=\underbrace {v\ldots v}_{n} .$$
\end{Oboz}

\begin{Lemma}\label{ass}
Пусть $w\in\mathbb{YF}_\infty$, $\delta\in(0,1)$, $n,m\in\mathbb{N}_0$: $m\ge n$. Тогда
$${\sum_{v\in\overline{P}(w,n,\delta)}\frac{d(\varepsilon,v)d(v,w_m)}{d(\varepsilon,w_m)}}\le \sum_{(a,b,k)\in\overline{P}(w,n,\delta,m)}{\frac{d(\varepsilon,2^kb)d(2^k,a)}{d(\varepsilon,w_m)}}.$$

\end{Lemma}
\begin{proof}
$${\sum_{v\in\overline{P}(w,n,\delta)}\frac{d(\varepsilon,v)d(v,w_m)}{d(\varepsilon,w_m)}}\le \sum_{(a,b,k)\in\overline{P}(w,n,\delta,m)}{\frac{d(\varepsilon,2^kb)d(2^k,a)}{d(\varepsilon,w_m)}}\Longleftrightarrow$$
$$\Longleftrightarrow {\sum_{v\in\overline{P}(w,n,\delta)}{\left(d(\varepsilon,v)d(v,w_m)\right)}}\le \sum_{(a,b,k)\in\overline{P}(w,n,\delta,m)}{{\left(d(\varepsilon,2^kb)d(2^k,a)\right)}}\Longleftrightarrow$$
$$\Longleftrightarrow(\text{Так как если $m\ge n$, то $|w_m|\ge m\ge n$ })\Longleftrightarrow$$
$$\Longleftrightarrow{\sum_{v\in\overline{P}(w,n,\delta)}{|\{t\in T(\varepsilon,w_m):\;t(n)=v\}|}}\le \sum_{(a,b,k)\in\overline{P}(w,n,\delta,m)}{{\left(d(\varepsilon,2^kb)d(2^k,a)\right)}}.$$

\begin{Oboz} \label{w}

\;

Пусть $x,y\in \mathbb{YF},$ $t\in T(\mathbb{YF}):$ $t\in T(x,y)$. Тогда обозначим за $c(t)$ самую верхнюю вершину пути $t$ из тех вершин пути $t$, у которых самый короткий общий суффикс с $y$.

Формально:

Пусть $x,y\in \mathbb{YF},$ $t\in T(\mathbb{YF}):$ $t\in T(x,y)$. Тогда обозначим за $c(t)$ такую вершину $t(z)$ пути $t$ $(z\in\overline{|x|,|y|})$, 
что $\nexists z' \in\overline{|x|,|y|}:$ $h'(t(z'),y)<h'(t(z),y)$ и $\nexists z' \in\overline{z+1,|y|}:$ $h'(t(z'),y)=h'(t(z),y)$.
\end{Oboz}

\begin{Zam}
Очевидно, что если $x,y\in \mathbb{YF},$ $t\in T(\mathbb{YF}):$ $t\in T(x,y)$, то вершина $c(t)$ существует и однозначно определена.
\end{Zam}


Давайте рассмотрим теперь путь $t\in T(\varepsilon,w_m)$ такой, что $t(n)\in \overline{P}(w,n,\delta)$.

Пусть $t'$ -- это часть данного пути, которая проходит от вершины $w_m$ до вершины $t(n)$.

Формально:

$t'\in T(t(n),w_m)$, $t'(|w_m|)=t(|w_m|)=w_m$, $t'(|w_m|-1)=t(|w_m|-1)$, $\ldots$, $t'(n+1)=t(n+1)$, $t'(n)=t(n)$.

Рассмотрим вершину $c(t')$. Рассмотрим три случая:
\begin{enumerate}
    \item $\exists x,y,z \in\mathbb{YF}:$ $w_m=x2y$, $c(t')=z1y$. 
    
    Как в данном случае проходит путь $t'$?
    
    \begin{itemize}
        \item По Обозначению \ref{w} $h'(t'(|c(t')|+1),w_m)>h'\left(t'(|c(t')|),w_m\right)=h'(c(t'),w_m)=|y|$. А значит $\exists q\in\mathbb{YF}:$ $t'(|c(t')|+1)=q2y$. Таким образом, между  $t'(|c(t')|+1)$ и $t'(|c(t')|)=c(t')$ шаг ``вниз'' выглядит следующим образом: 
        $$q2y\to z1y.$$
        
        Вспомним, как выглядят родители вершины графа Юнга -- Фибоначчи:
        
        \setlength{\unitlength}{0.20mm}
        \begin{picture}(400,200)
        \put(270,160){$2221\ldots$}
        \put(300,150){\vector(2,-1){200}}
        \put(300,150){\vector(1,-2){50}}
        \put(300,150){\vector(-1,-2){50}}
        \put(300,150){\vector(-2,-1){200}}
        \put(60,30){$1221\ldots$}
        \put(220,30){$2121\ldots$}
        \put(320,30){$2211\ldots$}
        \put(480,30){$222\ldots$}
        \end{picture}

$$\text{или}$$

        \setlength{\unitlength}{0.20mm}
        \begin{picture}(400,200)
        \put(280,160){$2222$}
        \put(300,150){\vector(2,-1){200}}
        \put(300,150){\vector(1,-2){50}}
        \put(300,150){\vector(-1,-2){50}}
        \put(300,150){\vector(-2,-1){200}}
        \put(80,30){$1222$}
        \put(220,30){$2122$}
        \put(330,30){$2212$}
        \put(490,30){$2221$}
        \end{picture}

        Становится ясно, что если $q$ содержит хотя бы одну единицу, то любой родитель $q2y$ также заканчивается на $2y$. А значит, $q$ не содержит  ни одной единицы, то есть $\exists e\in\mathbb{N}_0:$ $q=2^e$.  Таким образом, между  $t'(|c(t')|+1)$ и $t'(|c(t')|)=c(t')$ шаг ``вниз'' выглядит следующим образом: 
        $$2^e2y\to 2^e1y.$$
        
        \item $u:=e+1$. Ясно, что $|t'(|c(t')|+1)|=|c(t')|+1\ge n+1>n$, то есть $|2^e2y| > n\Longleftrightarrow |2^{u}y|>n \Longleftrightarrow |2^u|>n-|y|;$
        
        Также по Обозначению \ref{w} $|y|=h'(c(t'),w_m)=h'(t'(|c(t')|),w_m) \le h'(t'(n),w_m)=h'(t(n),w_m)=(\text{так как $|w_m|\ge m\ge n=|t(n)|$})=h'(t(n),w)<\left(\text{так как }t(n)\in \overline{P}(w,n,\delta)\right)<(1-\delta)n$.
        
        Таким образом, можно сделать вывод, что $2u=|2^{u}|>n-|y|>n-(1-\delta)n\ge{\delta n}\Longrightarrow 2u>{\delta n}$.
        
        То есть $u>\frac{\delta n}{2}$.

        \item И снова по Обозначению \ref{w} если $r\in\overline{|c(t')|+1,|w_m|}$, то $h'(t'(r),w_m)> h'(t'(|c(t')|),w_m)=h'(c(t'),w_m)=|y|$. А значит, номера вершин из множества $\left\{t'(r):\;r\in\overline{|c(t')|+1,|w_m|}\right\}$ заканчиваются на $2y$.
        \item Путь $t'$ выглядит следующим образом:
        $$t'(|w_m|)=w_m=x2y \to t'(|w_m|-1)\to \ldots \to $$
        $$\to t'(|c(t')|+1)=2^e2y=2^uy \to t'(|c(t')|)=c(t')=2^e1y \to$$
        $$\to \ldots \to t'(n)=t(n) .$$
    \end{itemize}
    
    \item $\exists x,y,z \in\mathbb{YF}:$ $w_m=x1y$, $c(t')=z2y$. 
    
    Как в данном случае проходит путь $t'$?
    
    \begin{itemize}
        \item По Обозначению \ref{w} $h'(t'(|c(t')|+1),w_m)>h'(t'(|c(t')|),w_m)=h'(c(t'),w_m)=|y|$. А значит $\exists q\in\mathbb{YF}:$ $t'(|c(t')|+1)=q1y$. Таким образом, между  $t'(|c(t')|+1)$ и $t'(|c(t')|)=c(t')$ шаг ``вниз'' выглядит следующим образом: 
        $$q1y\to z2y.$$
        
        Вспомним, как выглядят родители вершины графа Юнга--Фибоначчи:
        
        \setlength{\unitlength}{0.20mm}
        \begin{picture}(400,200)
        \put(270,160){$2221\ldots$}
        \put(300,150){\vector(2,-1){200}}
        \put(300,150){\vector(1,-2){50}}
        \put(300,150){\vector(-1,-2){50}}
        \put(300,150){\vector(-2,-1){200}}
        \put(60,30){$1221\ldots$}
        \put(220,30){$2121\ldots$}
        \put(320,30){$2211\ldots$}
        \put(480,30){$222\ldots$}
        \end{picture}

$$\text{или}$$

        \setlength{\unitlength}{0.20mm}
        \begin{picture}(400,200)
        \put(280,160){$2222$}
        \put(300,150){\vector(2,-1){200}}
        \put(300,150){\vector(1,-2){50}}
        \put(300,150){\vector(-1,-2){50}}
        \put(300,150){\vector(-2,-1){200}}
        \put(80,30){$1222$}
        \put(220,30){$2122$}
        \put(330,30){$2212$}
        \put(490,30){$2221$}
        \end{picture}

        Становится ясно, что если $q$ содержит хотя бы одну единицу, то любой родитель $q1y$ также заканчивается на $1y$. А значит, $q$ не содержит  ни одной единицы, то есть $\exists e\in\mathbb{N}_0: q=2^e$. Таким образом, между  $t'(|c(t')|+1)$ и $t'(|c\left(t'\right)|)=c(t')$ шаг ``вниз'' выглядит следующим образом: 
        $$2^e1y\to 2^ey.$$

        \item $u:=e$. Ясно, что $|c(t')|\ge n$, то есть $|2^ey| \ge n\Longleftrightarrow |2^{u}y|\ge n \Longleftrightarrow |2^u|\ge n-|y|;$
        
        Также по Обозначению \ref{w} $|y|=h'(c(t'),w_m)=h'(t'(|c(t')|),w_m) \le h'(t'(n),w_m)=h'(t(n),w_m)=(\text{так как $|w_m|\ge m\ge n=|t(n)|$})=h'(t(n),w)<\left(\text{так как }t(n)\in \overline{P}(w,n,\delta)\right)<(1-\delta)n$.
        
        Таким образом, можно сделать вывод, что $2u=|2^{u}|\ge n-|y| >n-(1-\delta)n\ge{\delta n}\Longrightarrow 2u>{\delta n}$.
        
        То есть $u>\frac{\delta n}{2}$.
        
        \item И снова по Обозначению \ref{w} если $r\in\overline{|c(t')|+1,|w_m|}$, то $h'(t'(r),w_m)> h'(t'(|c(t')|),w_m)=h'(c(t'),w_m)=|y|$. А значит, номера вершин из множества $\left\{t'(r):r\in\overline{|c(t')|+1,|w_m|}\right\}$ заканчиваются на $1y$.

        \item Путь $t'$ выглядит следующим образом:
        $$t'(|w_m|)=w_m=x1y \to t'(|w_m|-1)\to \ldots \to $$
        $$\to t'(|c(t')|+1)=2^e1y \to t'(|c(t')|)=c(t')=2^ey=2^uy \to$$
        $$\to \ldots \to t'(n)=t(n) .$$
    
    \end{itemize}
    \item $\exists x,y \in\mathbb{YF}:$ $w_m=xy$, $c(t')=y$. 
    
    По Обозначению \ref{w} $n\le |c(t')| =h'(c(t'),w_m)=h'(t'(|c(t')|),w_m) \le h'(t'(n),w_m)=h'(t(n),w_m)=
    (\text{так как $|w_m|\ge m\ge n=|t(n)|$})=h'(t(n),w)<
    \left(\text{так как }t(n)\in \overline{P}(w,n,\delta)\right)<(1-\delta)n \Longrightarrow n< (1-\delta)n$. Противоречие. Данный случай невозможен.
\end{enumerate}

Ясно, что все случаи разобраны.

Итак, в обоих возможных случаях можно сделать следующие выводы:

$\exists x,y \in\mathbb{YF},$ $u\in\mathbb{N}_0:$
\begin{itemize}
    \item Путь $t'$ выглядит следующим образом:
    $$w_m=xy \to \ldots \to 2^uy \to \ldots \to t'(n)=t(n); $$
    
    \item Номера всех вершин данного пути от $w_m=xy$ до $2^uy$ заканчиваются на $y;$
    
    \item $|y|<(1-\delta)n;$
    
    \item $u>\frac{\delta n}{2}$.
    
\end{itemize}

\begin{Oboz}
Пусть $w\in\mathbb{YF}_\infty$, $\delta\in(0,1)$, $n,m\in\mathbb{N}_0: m\ge n$. Тогда обозначим за
$$T(w,n,\delta,m)$$
множество таких путей $t\in T(\varepsilon,w_m)$, что $\exists x,y\in\mathbb{YF},$ $u\in\mathbb{N},$ такие что
\begin{itemize}
    \item Путь $t$ выглядит следующим образом:
    $$w_m=xy \to \ldots \to 2^uy \to \ldots \to \varepsilon ;$$
    
    \item Номера всех вершин данного пути от $w_m=xy$ до $2^uy$ заканчиваются на $y;$
    
    \item $|y|<(1-\delta)n;$
    
    \item $u>\frac{\delta n}{2}$.
\end{itemize}

\end{Oboz}

Мы рассмотрели путь $t\in T(\varepsilon,w_m)$ такой, что $t(n)\in \overline{P}(w,n,\delta)$, после чего поняли, что $t\in T(w,n,\delta,m)$.

Таким образом, ясно, что
$${{|\{t\in T(\varepsilon,w_m):\;t(n)\in \overline{P}(w,n,\delta) \}|}}\le |T(w,n,\delta,m)|\Longleftrightarrow$$
$$\Longleftrightarrow{\sum_{v\in\overline{P}(w,n,\delta)}{|\left\{t\in T(\varepsilon,w_m):\;t(n)=v\right\}|}}\le |T(w,n,\delta,m)|.$$

Теперь ясно, что если мы просуммируем по всем тройкам $(x,y,u)\in\mathbb{YF}\times\mathbb{YF}\times\mathbb{N},$ таким что
\begin{itemize}
    \item $w_m=xy,$ 
    
    \item $|y|<(1-\delta)n,$
    
    \item $u>\frac{\delta n}{2},$
\end{itemize}
количество путей вида
$$w_m=xy \to \ldots \to 2^{u}y \to \ldots \to \varepsilon ,$$
таких что номера всех вершин данного пути от $w_m$ до $2^uy$ заканчиваются на $y$, 
то мы посчитаем все пути из $T(w,n,\delta,m)$ хотя бы один раз.

А теперь вспомним определение $\overline{P}(w,n,\delta,m)$, поймём, что при фиксированных $x,y$ и $u$ количество путей вида
$$w_m=xy \to \ldots \to 2^{u}y \to \ldots \to \varepsilon ,$$
таких что номера всех вершин данного пути от $w_m$ до $2^uy$ заканчиваются на $y,$ равно $${{d(\varepsilon,2^uy)d(2^u,x)}}$$
и сделаем вывод, что
$$|T(w,n,\delta,m)| \le  \sum_{(x,y,u)\in\overline{P}(w,n,\delta,m)}\left({d(\varepsilon,2^uy)d(2^u,x)}\right).$$
    
Таким образом, мы доказали, что
$${\sum_{v\in\overline{P}(w,n,\delta)}{|\{t\in T(\varepsilon,w_m):\;t(n)=v\}|}} \le |T(w,n,\delta,m)| \le \sum_{(x,y,u)\in\overline{P}(w,n,\delta,m)}\left({{d(\varepsilon,2^uy)d(2^u,x)}}\right)\Longrightarrow$$
$$\Longrightarrow{\sum_{v\in\overline{P}(w,n,\delta)}{|\{t\in T(\varepsilon,w_m):\;t(n)=v\}|}} \le  \sum_{(a,b,k)\in\overline{P}(w,n,\delta,m)}\left({{d(\varepsilon,2^kb)d(2^k,a)}}\right)\Longleftrightarrow$$
$$\Longleftrightarrow{\sum_{v\in\overline{P}(w,n,\delta)}\frac{d(\varepsilon,v)d(v,w_m)}{d(\varepsilon,w_m)}}\le \sum_{(a,b,k)\in\overline{P}(w,n,\delta,m)}{\frac{d(\varepsilon,2^kb)d(2^k,a)}{d(\varepsilon,w_m)}},$$
что и требовалось.

Лемма доказана.
\end{proof}

\begin{Oboz} Пусть $x\in\mathbb{YF},$ $i\in\mathbb{N}_0:$ $i\in \overline{1,d(x)}$. Тогда
$$g'(x,i):=g(x,i)-2i+2.$$
\end{Oboz}

    \begin{Oboz}
    Пусть $a\in\mathbb{YF}$, $k\in\mathbb{N}_0$, $i=(i_1,\ldots,i_{d(a)-k})\in\mathbb{N}^{d(a)-k}:$ $d(a)\ge k$, $1\le i_1<i_2<\ldots<i_{d(a)-k}\le d(a)$. 
    
    Тогда:
    \begin{itemize}
        \item     
        за $d(2^k,a,i)$ обозначим число таких путей ``вниз'' из $a$ в $2^k$, при прохождении которых удаляются те и только те двойки, которые изначально были $i_1$-ой, $i_2$-ой, $\ldots$ и $i_{d(a)-k}$-ой в номере $a$, если считать справа;
    
        \item  за $a(i)$ обозначим вершину графа Юнга--Фибоначчи, $(a(i)\in\mathbb{YF})$, номер которой получается из номера вершины $a$ удалением $i_1$-ой, $i_2$-ой, $\ldots$ и $i_{d(a)-k}$-ой двоек, если считать справа.
   
    \end{itemize}
\end{Oboz}
    
    \begin{Prop}    \label{2power}
    Пусть $a\in\mathbb{YF}$, $k\in\mathbb{N}_0$, $i=(i_1,\ldots,i_{d(a)-k})\in\mathbb{N}^{d(a)-k}:$ $d(a)\ge k$, $1\le i_1<i_2<\ldots<i_{d(a)-k}\le d(a)$. Тогда
    $$d(2^{k},a,i)=\prod_{j=1}^{d(a)-k} (g'(a,i_j)+2j-2).$$
    \end{Prop}
    \begin{proof}

    Хотим доказать, что 
    $$d(2^{k},a,i)=d(\varepsilon,a(i))=\prod_{j=1}^{d(a)-k} (g'(a,i_j)+2j-2).$$
    
    Для начала поймём, что 
    $$d(2^{k},a,i)=d(\varepsilon,a(i)):$$
    Это несложно понять по следующим двум картинкам. Вверху нарисовано, как устроен шаг пути ``вниз'' из вершины $a$ в $2^k$. Перечёркнуты тут двойки, которые в процессе прохождения пути не должны быть удалены. Внизу нарисовано, как устроен соответствующий шаг пути ``вниз'' из вершины $a(i)$ в $\varepsilon$. Ясно, как из этого следует биекция между этими множествами путей.
    
        \setlength{\unitlength}{0.20mm}
        \begin{picture}(400,200)
        \put(250,160){$a=2\xcancel{2}2\xcancel{2}21...$}
        \put(300,150){\vector(2,-1){200}}
        \put(300,150){\vector(1,-2){50}}
        \put(300,150){\vector(-1,-2){50}}
        \put(300,150){\vector(-2,-1){200}}
        \put(60,30){$1\xcancel{2}2\xcancel{2}21...$}
        \put(220,30){$2\xcancel{2}1\xcancel{2}21...$}
        \put(320,30){$2\xcancel{2}2\xcancel{2}11...$}
        \put(480,30){$2\xcancel{2}2\xcancel{2}2...$}
        \end{picture}

        \setlength{\unitlength}{0.20mm}
        \begin{picture}(400,200)
        \put(250,160){$a(i)=2221\ldots$}
        \put(300,150){\vector(2,-1){200}}
        \put(300,150){\vector(1,-2){50}}
        \put(300,150){\vector(-1,-2){50}}
        \put(300,150){\vector(-2,-1){200}}
        \put(60,30){$1221\ldots$}
        \put(220,30){$2121\ldots$}
        \put(320,30){$2211\ldots$}
        \put(480,30){$222\ldots$}
        \end{picture}
    
    Иными словами, ясно, что двойки, которые нельзя удалить, ни на что не влияют, так как их нельзя удалить, а также, так как их расположение относительно других символов никак не влияет на то, какие операции можно сделать с остальными символами.

    Теперь хотим понять, что
    $$d(\varepsilon,a(i))=\prod_{j=1}^{d(a)-k} (g'(a,i_j)+2j-2).$$

    По определению функции $g'$ и формуле для числа путей ``вниз'' в $\varepsilon$ можно понять, что
    $$\prod_{j=1}^{d(a)-k} (g'(a,i_j)+2j-2)=\prod_{j=1}^{d(a)-k} (g(a,i_j)-2i_j+2+2j-2)=\prod_{j=1}^{d(a)-k} (g(a,i_j)-2i_j+2j)=$$
    \begin{center}
    =(Так как ясно, что $2i_j-2j$ -- это удвоенное количество таких двоек в номере $a$, которые нельзя удалять, и которые при этом стоят справа от $j$-ой ``разрешённой'' двойки)=
    \end{center}
    $$=\prod_{j=1}^{d(a)-k} g(a(i),j)=\prod_{j=1}^{d(a(i))} g(a(i),j)=d(\varepsilon,a(i)).$$
    
    Таким образом, мы доказали, что 
    $$d(2^{k},a,i)=d(\varepsilon,a(i))=\prod_{j=1}^{d(a)-k} (g'(a,i_j)+2j-2),$$
    что и требовалось.

    Утверждение доказано.
    \end{proof}

\begin{Prop} \label{car}
Пусть $a\in \mathbb{YF}$, $k\in\mathbb{N}_0$. Тогда
$$d(2^k,a)=\sum_{1\le i_1<i_2<...<i_{d(a)-k}\le d(a)}\left(\prod_{j=1}^{d(a)-k} (g'(a,i_j)+2j-2)\right).$$
\end{Prop}
\begin{proof}
Рассмотрим два случая:
\begin{enumerate}
    \item $d(a)<k$. 
    
    Ясно, что при каждом шаге ``вниз'' количество двоек не увеличивается, так как мы можем только заменить двойку на единицу или удалить единицу

        \setlength{\unitlength}{0.20mm}
        \begin{picture}(400,200)
        \put(270,160){$2221\ldots$}
        \put(300,150){\vector(2,-1){200}}
        \put(300,150){\vector(1,-2){50}}
        \put(300,150){\vector(-1,-2){50}}
        \put(300,150){\vector(-2,-1){200}}
        \put(60,30){$1221\ldots$}
        \put(220,30){$2121\ldots$}
        \put(320,30){$2211\ldots$}
        \put(480,30){$222\ldots$}
        \end{picture}

$$\text{или}$$

        \setlength{\unitlength}{0.20mm}
        \begin{picture}(400,200)
        \put(280,160){$2222$}
        \put(300,150){\vector(2,-1){200}}
        \put(300,150){\vector(1,-2){50}}
        \put(300,150){\vector(-1,-2){50}}
        \put(300,150){\vector(-2,-1){200}}
        \put(80,30){$1222$}
        \put(220,30){$2122$}
        \put(330,30){$2212$}
        \put(490,30){$2221$}
        \end{picture}
        
    А значит, так как справа сумма пустая,
    $$d(2^k,a)=0=\sum_{1\le i_1<i_2<...<i_{d(a)-k}\le d(a)}\left(\prod_{j=1}^{d(a)-k} (g'(a,i_j)+2j-2)\right).$$

    \item $d(a)\ge k.$

    Ясно, что при любом пути ``вниз'' из вершины $a$ в вершину $2^k$ при $d(a)\ge k$ какие-либо $k$ из изначально находящихся $d(a)$ двоек в номере $a$ остаются нетронутыми, а $d(a)-k$ --- удаляются. Таким образом, для завершения доказательства Утверждения осталось только просуммировать Утверждение \ref{2power} по всем $i=(i_1,\ldots,i_{d(a)-k})\in\mathbb{N}^{d(a)-k}$: $1\le i_1<i_2<\ldots<i_{d(a)-k}\le d(a)$. 
    $$d(2^k,a)=\sum_{\begin{smallmatrix}i=(i_1,\ldots,i_{d(a)-k}):        \\1\le i_1<i_2<...<i_{d(a)-k}\le d(a)\end{smallmatrix}}d(2^{k},a,i)=\sum_{1\le i_1<i_2<...<i_{d(a)-k}\le d(a)}\left(\prod_{j=1}^{d(a)-k} (g'(a,i_j)+2j-2)\right),$$
    что и требовалось.

    Утверждение доказано.
\end{enumerate}

\end{proof}

\begin{Prop} \label{inf}
Пусть $w\in\mathbb{YF}_\infty^+$. Тогда 
$$\sum_{i=1}^{d(w)} \frac{1}{g(w,i)}<\infty.$$
\end{Prop}

\begin{proof}

$$w\in\mathbb{YF}_\infty^+\Longrightarrow \pi(w)>0\Longleftrightarrow\prod_{i:g(w,i)> 1  } \frac{g(w,i)-1}{g(w,i)}>0\Longleftrightarrow$$
$$\Longleftrightarrow \prod_{i:g(w,i)> 1  } \frac{g(w,i)}{g(w,i)-1}<\infty \Longleftrightarrow \prod_{i:g(w,i)> 1  } \left(1 + \frac{1}{g(w,i)-1}\right)<\infty.$$
$$\infty>\prod_{i:g(w,i)> 1  } \left(1 + \frac{1}{g(w,i)-1}\right)\ge \prod_{i:g(w,i)> 1  } \left(1 + \frac{1}{g(w,i)}\right)\ge 1+\sum_{i:g(w,i)> 1  }  \frac{1}{g(w,i)} \ge \sum_{i=1}^{d(w)} \frac{1}{g(w,i)}.$$
\end{proof}

\begin{Lemma} \label{rap}
Пусть $w\in \mathbb{YF}_\infty^+,$ $s\in(0,1)$. Тогда $\exists c_1,c_2\in\mathbb{N}_0:$ $\forall a,b\in\mathbb{YF},$ $m,k\in\mathbb{N}_0:$ $w_m=ab,$ $k\ge c_2$
$$\frac{\displaystyle d\left(2^k,a\right)}{\displaystyle d\left(\varepsilon,a1^{|b|}\right)} \le c_1\frac{s^{k-c_2}}{(k-c_2)!}.$$
\end{Lemma}
\begin{proof}

Давайте рассмотрим $a,b\in\mathbb{YF},$ $m,k\in\mathbb{N}_0:$ $w_m=ab,$ $d(a)\ge k$.

Очевидно, что в данном случае $|a|\ge 2k$. 

Ясно, что $d\left(2^k,a\right)\le d\left(2^k,a1^{|b|}\right)$, так как каждому пути $t\in T\left(2^k,a\right)$ ``вниз'' $$a=t(|a|)\to t(|a|-1)\to t(|a|-2)\to \ldots\to  t(|2k|)=2^k$$
можно однозначно сопоставить путь $t'\in T\left(2^k,a1^{|b|}\right)$ ``вниз'' $$a1^{|b|}=t(|a|)1^{|b|}\to t(|a|-1)1^{|b|}\to t(|a|-2)1^{|b|}\to \ldots\to$$
$$\to \ldots\to  t(|2k|)1^{|b|}=2^k1^{|b|}\to 2^k1^{|b|-1}\to 2^k1^{|b|-2}\to \ldots \to 2^k,$$ причём любой паре таких путей соответствуют разные пути.

Давайте посчитаем:
$$\frac{\displaystyle d\left(2^k,a\right)}{\displaystyle d\left(\varepsilon,a1^{|b|}\right)} \le \frac{d\left(2^k,a1^{|b|}\right)}{d\left(\varepsilon,a1^{|b|}\right)}=\left(\text{По Утверждению \ref{car} при $\left(a1^{|b|}\right)\in \mathbb{YF}$, $k\in\mathbb{N}_0$}\right)=$$
$$=\frac{\displaystyle\sum_{1\le i_1<i_2<...<i_{d(a)-k}\le d(a)}\left(\prod_{j=1}^{d(a)-k} \left(g'\left(a1^{|b|},i_j\right)+2j-2\right)\right)}{\displaystyle d\left(\varepsilon,a1^{|b|}\right)}=$$
$$=\left(\text{По формуле для количества путей ``вниз'' в $\varepsilon$}\right)=$$
$$=\frac{\displaystyle\sum_{1\le i_1<i_2<...<i_{d(a)-k}\le d(a)}\left(\prod_{j=1}^{d(a)-k} \left(g'\left(a1^{|b|},i_j\right)+2j-2\right)\right)}{\displaystyle\prod_{j=1}^{d\left(a1^{|b|}\right)} g\left(a1^{|b|},j\right)}=$$
$$=\frac{\displaystyle\sum_{1\le i_1<i_2<...<i_{d(a)-k}\le d(a)}\left(\prod_{j=1}^{d(a)-k} \left(g'\left(a1^{|b|},i_j\right)+2j-2\right)\right)}{\displaystyle\prod_{j=1}^{d(a)} g\left(a1^{|b|},j\right)}=$$
$$=\text{(По определению $g'$)}=$$
$$=\frac{\displaystyle\sum_{1\le i_1<i_2<...<i_{d(a)-k}\le d(a)}\left(\prod_{j=1}^{d(a)-k} \left(g\left(a1^{|b|},i_j\right)-2i_j+2+2j-2\right)\right)}{\displaystyle\prod_{j=1}^{d(a)} g\left(a1^{|b|},j\right)}=$$
$$=\frac{\displaystyle\sum_{1\le i_1<i_2<...<i_{d(a)-k}\le d(a)}\left(\prod_{j=1}^{d(a)-k} \left(g\left(a1^{|b|},i_j\right)-2i_j+2j\right)\right)}{\displaystyle\prod_{j=1}^{d(a)} g\left(a1^{|b|},j\right)}\le$$
$$\le \frac{\displaystyle\sum_{1\le i_1<i_2<...<i_{d(a)-k}\le d(a)}\left(\prod_{j=1}^{d(a)-k} g\left(a1^{|b|},i_j\right)\right)}{\displaystyle\prod_{j=1}^{d(a)} g\left(a1^{|b|},j\right)}= {\sum_{1\le i_1<i_2<...<i_{k}\le d(a)}\left( \frac{1}{\displaystyle\prod_{j=1}^{k} g\left(a1^{|b|},i_j\right)} \right)} \le$$
$$\le \left(\text{Из определения функции $g$ ясно, что $\left\{g\left(a1^{|b|},j\right)\right\}_{j=1}^{d\left(a1^{|b|}\right)}=\left\{g\left(a1^{|b|},j\right)\right\}_{j=1}^{d(a)}\subseteq \{g(ab,j)\}_{j=1}^{d(ab)}$}\right) \le $$
$$\le {\sum_{1\le i_1<i_2<...<i_{k}\le d(ab)}\left( \frac{1}{\displaystyle\prod_{j=1}^{k} g(ab,i_j)} \right)} ={\sum_{1\le i_1<i_2<...<i_{k}\le d(w_m)}\left(\frac{1}{\displaystyle \prod_{j=1}^{k} g(w_m,i_j)}\right)}\le$$
$$\le \left(\text{Из определения функции $g$ ясно, что $\{g(w_m,j)\}_{j=1}^{d(w_m)}\subseteq \{g(w,j)\}_{j=1}^{d(w)}$}\right) \le $$
$$\le {\sum_{1\le i_1<i_2<...<i_{k}\le d(w)}\left(\frac{1}{\displaystyle \prod_{j=1}^{k} g(w,i_j)}\right)} .$$
Иными словами, последние два неравенства верны, так как $w$ содержит все двойки из $a1^{|b|}$, причём именно на тех же местах.

Заметим, что данное выражение не зависит от $a,b$ и $m$.

По Утверждению \ref{inf} при нашем $w\in \mathbb{YF}_\infty^+$
$$\sum_{i=1}^{d(w)} \frac{1}{g(w,i)}<\infty \Longrightarrow \exists S\in\mathbb{N}_0: \; \sum_{i=S+1}^{d(w)} \frac{1}{g(w,i)}<s.$$

Выберем такое $S\in\mathbb{N}_0$. Пусть $c_2=S$. 

Несложно заметить (из раскрытия скобок при возведении в степень), что при $k\ge c_2=S$
$${\sum_{S+1\le i_1<i_2<...<i_{k}\le d(w)}\left( \frac{\displaystyle 1}{\displaystyle\prod_{j=1}^{k} g(w,i_j)}\right) }\le \frac{\displaystyle\left(\sum_{i=S+1}^{d(w)} \frac{1}{g(w,i)}\right)^k}{k!} \le\frac{s^k}{k!}; $$
$${\sum_{1\le i_1<S+1\le i_2<...<i_{k}\le d(w)}\left(\frac{1}{\displaystyle\prod_{j=1}^{k} g(w,i_j)}\right)}\le \left(\sum_{i=1}^S\frac{1}{g(w,i)}\right) \frac{\displaystyle\left(\sum_{i=S+1}^{d(w)} \frac{1}{g(w,i)}\right)^{k-1}}{(k-1)!}\le$$
$$\le  \left(\sum_{i=1}^S\frac{1}{g(w,i)}\right) \frac{s^{k-1}}{(k-1)!};$$
$${\sum_{1\le i_1< i_2<S+1\le i_3<...<i_{k}\le d(w)}\left(\frac{1}{\displaystyle\prod_{j=1}^{k} g(w,i_j)}\right)}\le \left(\sum_{i=1}^S\frac{1}{g(w,i)}\right)^2 \frac{\displaystyle\left(\sum_{i=S+1}^{d(w)} \frac{1}{g(w,i)}\right)^{k-2}}{(k-2)!}\le$$
$$\le  \left(\sum_{i=1}^S\frac{1}{g(w,i)}\right)^2\frac{s^{k-2}}{(k-2)!}; $$

$$\ldots$$

$${\sum_{1\le i_1< i_2<\ldots<i_l<S+1\le i_{l+1}<\ldots<i_{k}\le d(w)}\left(\frac{1}{\displaystyle\prod_{j=1}^{k} g(w,i_j)}\right)}\le \left(\sum_{i=1}^S\frac{1}{g(w,i)}\right)^l \frac{\displaystyle\left(\sum_{i=S+1}^{d(w)} \frac{1}{g(w,i)}\right)^{k-l}}{(k-l)!}\le$$
$$\le  \left(\sum_{i=1}^S\frac{1}{g(w,i)}\right)^l \frac{s^{k-l}}{(k-l)!};$$

$$\ldots$$

$${\sum_{1\le i_1< i_2<\ldots<i_S<S+1\le i_{S+1}<\ldots<i_{k}\le d(w)}\left(\frac{1}{\displaystyle\prod_{j=1}^{k} g(w,i_j)}\right)}\le \left(\sum_{i=1}^S\frac{1}{g(w,i)}\right)^S \frac{\displaystyle\left(\sum_{i=S+1}^{d(w)} \frac{1}{g(w,i)}\right)^{k-S}}{(k-S)!}\le$$
$$ \le \left(\sum_{i=1}^S\frac{1}{g(w,i)}\right)^S\frac{s^{k-S}}{(k-S)!} .$$

Выше написано $S+1$ неравенство, в $l$-ом неравенстве (при нумерации с нуля) слева написана сумма по таким $k$ $i$-шкам, что ровно $l$ из них строго меньше, чем $S+1$, а остальные $k-l$ -- не меньше. И данную сумму мы оцениваем сверху. Ясно, что других вариантов расположения $k$ $i$-шек нет, а значит, просуммировав данные неравенства, получим верхнюю оценку для всех таких наборов из $k$ $i$-шек:

$${\sum_{1\le i_1<i_2<...<i_{k}\le d(w)}\frac{1}{\displaystyle\prod_{j=1}^{k} g(w,i_j)}} \le$$
$$\le \frac{s^k}{k!}+\left(\sum_{i=1}^S\frac{1}{g(w,i)}\right)\frac{s^{k-1}}{(k-1)!} +\left(\sum_{i=1}^S\frac{1}{g(w,i)}\right)^2\frac{s^{k-2}}{(k-2)!}+ \ldots +\left(\sum_{i=1}^S\frac{1}{g(w,i)}\right)^S\frac{s^{k-S}}{(k-S)!}\le$$
$$\le \frac{s^k}{(k-S)!}+\left(\sum_{i=1}^S\frac{1}{g(w,i)}\right)\frac{s^{k-1}}{(k-S)!} +\left(\sum_{i=1}^S\frac{1}{g(w,i)}\right)^2\frac{s^{k-2}}{(k-S)!}+ \ldots +\left(\sum_{i=1}^S\frac{1}{g(w,i)}\right)^S\frac{s^{k-S}}{(k-S)!}\le$$
$$\le\text{(Так как $s\in (0,1)$)}\le$$
$$\le \frac{s^{k-S}}{(k-S)!}+\left(\sum_{i=1}^S\frac{1}{g(w,i)}\right)\frac{s^{k-S}}{(k-S)!} +\left(\sum_{i=1}^S\frac{1}{g(w,i)}\right)^2\frac{s^{k-S}}{(k-S)!}+ \ldots +\left(\sum_{i=1}^S\frac{1}{g(w,i)}\right)^S\frac{s^{k-S}}{(k-S)!}=$$
$$= \frac{s^{k-S}}{(k-S)!}\left(1+\left(\sum_{i=1}^S\frac{1}{g(w,i)}\right)+\left(\sum_{i=1}^S\frac{1}{g(w,i)}\right)^2+\ldots+\left(\sum_{i=1}^S\frac{1}{g(w,i)}\right)^S \right)=$$
$$=\frac{s^{k-S}}{(k-S)!}\left(\sum_{j=0}^S\left(\sum_{i=1}^S\frac{1}{g(w,i)}\right)^j\right)\le\frac{s^{k-S}}{(k-S)!}\left\lceil \sum_{j=0}^S\left(\sum_{i=1}^S\frac{1}{g(w,i)}\right)^j \right\rceil. $$

Пусть $$c_1=\left\lceil \sum_{j=0}^S\left(\sum_{i=1}^S\frac{1}{g(w,i)}\right)^j \right\rceil<\infty.$$ 

При выбранных нами $c_1,c_2\in\mathbb{N}_0$ и произвольных $a,b\in\mathbb{YF},$ $m,k\in\mathbb{N}_0:$ $w_m=ab,$ $k\ge c_2$ есть два случая:
\begin{enumerate}
\item $d(a)\ge k$.

В данном случае мы поняли, что $$\frac{\displaystyle d(2^k,a)}{\displaystyle d\left(\varepsilon,a1^{|b|}\right)}\le {\sum_{1\le i_1<i_2<...<i_{k}\le d(w)}\left(\frac{1}{\displaystyle \prod_{j=1}^{k} g(w,i_j)}\right)}\le\frac{s^{k-S}}{(k-S)!} \left\lceil \sum_{j=0}^S\left(\sum_{i=1}^S\frac{1}{g(w,i)}\right)^j \right\rceil =c_1\frac{s^{k-c_2}}{(k-c_2)!}.$$
\item $d(a)<k$.

Заметим, что при каждом шаге ``вниз'' количество двоек не увеличивается, так как мы можем только заменить двойку на единицу или удалить единицу, а значит, если $d(a)<k$, то $d(2^k,a)=0$. Таким образом, в данном случае
$$\frac{\displaystyle d\left(2^k,a\right)}{\displaystyle d\left(\varepsilon,a1^{|b|}\right)}=0 \le c_1\frac{s^{k-c_2}}{(k-c_2)!}.$$
\end{enumerate}

Мы поняли, что при выбранных нами $c_1,c_2\in\mathbb{N}_0$ и произвольных $a,b\in\mathbb{YF},$ $m,k\in\mathbb{N}_0:$ $w_m=ab,$ $k\ge c_2$ в любом случае
$$\frac{\displaystyle d\left(2^k,a\right)}{\displaystyle d\left(\varepsilon,a1^{|b|}\right)}\le c_1\frac{s^{k-c_2}}{(k-c_2)!},$$
что и требовалось.

Лемма доказана.

\end{proof}

\begin{Prop} \label{razbivaem}
Пусть $x,x',x''\in\mathbb{YF}:$ $x=x'x''$. Тогда
$$d(\varepsilon,x)=d(\varepsilon,x'')d\left(\varepsilon,x'1^{\left|x''\right|}\right).$$
\end{Prop}
\begin{proof} 
Воспользуемся формулой для числа путей ``вниз'' в $\varepsilon$ и определением функции $g$:
$$d(\varepsilon,x)=\prod_{i=1}^{d(x)}g(x,i)=\left(\prod_{i=1}^{d\left(x''\right)}g(x,i)\right)\left(\prod_{i=d\left(x''\right)+1}^{d(x)}g(x,i)\right)=d\left(\varepsilon,x''\right)d\left(\varepsilon,x'1^{\left|x''\right|}\right).$$

\end{proof}

Теперь давайте докажем Теорему. Самое время вспомнить формулировку:

\renewcommand{\labelenumi}{\arabic{enumi}$)$}
\renewcommand{\labelenumii}{\arabic{enumi}.\arabic{enumii}$^\circ$}
\renewcommand{\labelenumiii}{\arabic{enumi}.\arabic{enumii}.\arabic{enumiii}$^\circ$}

Пусть $w\in\mathbb{YF}_\infty^+,$ $\delta\in(0,1)$. Тогда
\begin{enumerate}
    \item $$\lim_{n \to \infty}{\sum_{v\in \overline{P}(w,n,\delta)}\mu_w(v)=0};$$
    \item $$\lim_{n \to \infty}{\sum_{v\in P(w,n,\delta)}\mu_w(v)=1}.$$
\end{enumerate}

Давайте при наших $w\in\mathbb{YF}_\infty^+$ и $\delta\in(0,1)$  зафиксируем произвольные $m,n\in\mathbb{N}_0: m\ge n\ge 1 $. 

Ясно, что если $(a,b,k)\in\overline{P}(w,n,\delta,m)$ $\left(\text{то есть при $a,b\in\mathbb{YF}$, $k\in\mathbb{N}$: $ab=w_m,$ $|b|< (1-\delta)n$, $ k>\frac{\delta n}{2}$}\right)$, то
$$\frac{|b|}{k}< \frac{(1-\delta)n}{\frac{\delta n}{2}}=\frac{2(1-\delta )n}{\delta n}=\frac{2(1-\delta)}{\delta}<\frac{2}{\delta}\Longrightarrow |b|<\frac{2}{\delta}k.$$
А значит, если $(a,b,k)\in\overline{P}(w,n,\delta,m)$, то
$$\frac{d\left(\varepsilon,2^kb\right)}{d\left(\varepsilon,b\right)}=\left(\text{По Утверждению \ref{razbivaem} при $(2^kb),2^k,b\in\mathbb{YF}$}\right)=\frac{d\left(\varepsilon,b\right)d\left(\varepsilon,2^k1^{|b|}\right)}{d\left(\varepsilon,b\right)}=$$
$$=d\left(\varepsilon,2^k1^{|b|}\right)=(\text{По формуле для числа путей ``вниз'' в $\varepsilon$})=$$
$$=\prod_{i=1}^{d\left(2^k1^{|b|}\right)}g\left(2^k1^{|b|},i\right)=\prod_{i=1}^{k}g\left(2^k1^{|b|},i\right)=(\text{По определению функции $g$})=$$
$$=\prod_{i=1}^{k}(|b|+2i-1)\le \prod_{i=1}^{k}(|b|+2k-1)= (|b|+2k-1)^k< \left(\frac{2}{\delta} k+2k-1\right)^k.$$

Таким образом, мы поняли, что при $(a,b,k)\in\overline{P}(w,n,\delta,m)$
$$\frac{d\left(\varepsilon,2^kb\right)}{d\left(\varepsilon,b\right)}< \left(\frac{2}{\delta} k+2k-1\right)^k.$$

Применим Лемму \ref{rap} при нашем $w\in\mathbb{YF}_\infty^+$ и $\displaystyle s=\frac{1}{2e\left(\frac{2}{\delta}+2\right)}$ (ясно, что $s\in(0,1)$). По этой Лемме $\exists c_1,c_2\in\mathbb{N}_0$: $\forall a,b\in\mathbb{YF},$ $m,k\in\mathbb{N}_0$: $w_m=ab,$ $k\ge c_2$
$$\frac{d(2^k,a)}{d(\varepsilon, a1^{|b|})} \le c_1 \frac{\displaystyle\left(\frac{1}{2e(\frac{2}{\delta}+2)}\right)^{k-c_2}}{(k-c_2)!}.$$

Рассматриваем данные $c_1$ и $c_2$.

Перемножив два неравенства и вспомнив определение $\overline{P}(w,n,\delta,m)$, получаем, что $\forall a,b\in\mathbb{YF},$ $k\in\mathbb{N}$: $(a,b,k)\in\overline{P}(w,n,\delta,m),$ $k\ge c_2$
$${\frac{d\left(\varepsilon,2^kb\right)d\left(2^k,a\right)}{d\left(\varepsilon,b\right)d\left(\varepsilon,a1^{|b|}\right)}}< \left(\frac{2}{\delta} k+2k-1\right)^kc_1 \frac{\displaystyle\left(\frac{1}{2e(\frac{2}{\delta}+2)}\right)^{k-c_2}}{(k-c_2)!}\le \left(\frac{2}{\delta} k+2k\right)^kc_1 \frac{\displaystyle\left(\frac{1}{2e(\frac{2}{\delta}+2)}\right)^{k-c_2}}{(k-c_2)!}\le$$
$$ \le\left(k\left(\frac{2}{\delta} +2\right)\right)^kc_1 \frac{\displaystyle\left(\frac{1}{2e\left(\frac{2}{\delta}+2\right)}\right)^{k-c_2}}{(k-c_2)!} = k^k\left(\frac{2}{\delta} +2\right)^{k} c_1 \frac{\displaystyle\left(\frac{1}{2e}\right)^{k-c_2}\left(\frac{2}{\delta} +2\right)^{c_2-k}}{(k-c_2)!} =$$
$$=k^k\left(\frac{2}{\delta} +2\right)^{c_2} c_1 \frac{\displaystyle\left(\frac{1}{2e}\right)^{k-c_2}}{(k-c_2)!}=c_1\left(\frac{2}{\delta} +2\right)^{c_2} k^k \frac{\displaystyle\left(\frac{1}{2e}\right)^{k-c_2}}{(k-c_2)!}=\left(\text{при $C_1=c_1\left(\frac{2}{\delta}+2\right)^{c_2}$}\right)= C_1  k^k \frac{\displaystyle\left(\frac{1}{2e}\right)^{k-c_2}}{(k-c_2)!}.$$

Пусть $k>c_2$. Тогда ясно, что $(k-c_2)\in\mathbb{N}$. Применим формулу Стирлинга для $(k-c_2)$. Она гласит, что $\exists \theta_{k-c_2}\in(0,1)$: $$(k-c_2)!=\sqrt{2\pi(k-c_2)}  \left(\frac{k-c_2}{e}\right)^{k-c_2}\exp\frac{\theta_{k-c_2}}{12(k-c_2)}.$$

Таким образом, мы получаем, что $\forall a,b\in\mathbb{YF},$ $k\in\mathbb{N}$: $(a,b,k)\in\overline{P}(w,n,\delta,m)$, $k>c_2$
$${\frac{d\left(\varepsilon,2^kb\right)d\left(2^k,a\right)}{d(\varepsilon,b)d\left(\varepsilon,a1^{|b|}\right)}}<\left(\text{при $C_1=c_1\left(\frac{2}{\delta}+2\right)^{c_2}$}\right)< C_1  k^k \frac{\displaystyle\left(\frac{1}{2e}\right)^{k-c_2}}{(k-c_2)!}=$$
$$=C_1 k^k \frac{\displaystyle\left(\frac{1}{2e}\right)^{k-c_2}}{\displaystyle\sqrt{2\pi(k-c_2)}  \left(\frac{k-c_2}{e}\right)^{k-c_2}\exp\frac{\theta_{k-c_2}}{12(k-c_2)}}\le C_1 k^k \frac{\displaystyle\left(\frac{1}{2e}\right)^{k-c_2}}{\displaystyle\sqrt{2\pi(k-c_2)}  \left(\frac{k-c_2}{e}\right)^{k-c_2}}\le$$
$$\le C_1 k^k \frac{\left(\frac{1}{2}\right)^{k-c_2}}{\sqrt{2\pi(k-c_2)}  \left({k-c_2}\right)^{k-c_2}}\le C_1 k^k \frac{\left(\frac{1}{2}\right)^{k-c_2}}{  \left({k-c_2}\right)^{k-c_2}}= C_1 k^k \frac{\left(\frac{1}{2}\right)^{k}}{  \left({k-c_2}\right)^{k}}\cdot\frac{  \left({k-c_2}\right)^{c_2}}{\left(\frac{1}{2}\right)^{c_2}}\le$$
$$\le C_1k^k  \frac{\left(\frac{1}{2}\right)^{k}}{  \left({k-c_2}\right)^{k}}k^{c_2}2^{c_2}=2^{c_2}C_1k^{c_2}k^k  \frac{\left(\frac{1}{2}\right)^{k}}{  \left({k-c_2}\right)^{k}}=\left(\text{при $C_2=2^{c_2}C_1$}\right) = $$
$$=C_2 k^{c_2} k^k \frac{\left(\frac{1}{2}\right)^{k}}{  \left({k-c_2}\right)^{k}} = C_2k^{c_2} \left(\frac{k}{k-c_2}\right)^k {\left(\frac{1}{2}\right)^{k}}.$$

Как известно,
$$\lim_{k\to\infty}\left(\frac{k}{k-c_2}\right)^k=e^{c_2},$$
а значит, существует $K\in\mathbb{N}$: $K>c_2$, $\forall k\in\mathbb{N}:$ $k\ge K$
$$\left(\frac{k}{k-c_2}\right)^k<2e^{c_2}.$$
Зафиксируем данное $K$.

Мы получаем, что $\forall a,b\in\mathbb{YF},$ $k\in\mathbb{N}$: $(a,b,k)\in\overline{P}(w,n,\delta,m)$, $k\ge K$
$${\frac{d\left(\varepsilon,2^kb\right)d\left(2^k,a\right)}{d(\varepsilon,b)d\left(\varepsilon,a1^{|b|}\right)}}<\left(\text{при $C_2=2^{c_2}C_1$, $C_1=c_1\left(\frac{2}{\delta}+2\right)^{c_2}$}\right)< C_2k^{c_2} \left(\frac{k}{k-c_2}\right)^k {\left(\frac{1}{2}\right)^{k}}
<$$
$$<  C_2k^{c_2} 2e^{c_2} {\left(\frac{1}{2}\right)^{k}}=2C_2e^{c_2}k^{c_2}{\left(\frac{1}{2}\right)^{k}}=\left(\text{при $C=2C_2e^{c_2}$}\right)=Ck^{c_2} {\left(\frac{1}{2}\right)^{k}}.$$

Ясно, что экспонента растёт быстрее многочлена, а значит, $\exists K'\in\mathbb{N}:$ $K'\ge K,$ $\forall k\in\mathbb{N}:$ $k\ge K'$ $$Ck^{c_2}<{\left(\frac{4}{3}\right)^{k}}.$$
Зафиксируем данное $K'$.

Таким образом, мы получаем, что $\forall a,b\in\mathbb{YF},$ $k\in\mathbb{N}$: $(a,b,k)\in\overline{P}(w,n,\delta,m)$, $k\ge K'$
$${\frac{d\left(\varepsilon,2^kb\right)d\left(2^k,a\right)}{d\left(\varepsilon,b\right)d\left(\varepsilon,a1^{|b|}\right)}}<\left(\text{при $C=2C_2e^{c_2}$, $C_2=2^{c_2}C_1$, $C_1=c_1\left(\frac{2}{\delta}+2\right)^{c_2}$}\right)<$$
$$< Ck^{c_2} {\left(\frac{1}{2}\right)^{k}}<{\left(\frac{4}{3}\right)^{k}}{\left(\frac{1}{2}\right)^{k}}={\left(\frac{2}{3}\right)^{k}}.$$

Это в свою очередь значит, что $\forall a,b\in\mathbb{YF},$ $k\in\mathbb{N}$: $(a,b,k)\in\overline{P}(w,n,\delta,m)$, $k\ge K'$
$${\frac{d\left(\varepsilon,2^kb\right)d\left(2^k,a\right)}{d\left(\varepsilon,a1^{|b|}\right)d(\varepsilon,b)}}<
{\left(\frac{2}{3}\right)^{k}}\Longleftrightarrow\text{(По Утверждению \ref{razbivaem} при $(ab),a,b\in\mathbb{YF}$)}\Longleftrightarrow$$
$$\Longleftrightarrow{\frac{d(\varepsilon,2^kb)d(2^k,a)}{d(\varepsilon,ab)}}<
{\left(\frac{2}{3}\right)^{k}}\Longleftrightarrow{\frac{d(\varepsilon,2^kb)d(2^k,a)}{d(\varepsilon,w_m)}}<
{\left(\frac{2}{3}\right)^{k}}.$$

Пусть $m,n\in\mathbb{N}_0:$ $m\ge n\ge \frac{\displaystyle2 K'}{\displaystyle\delta}$. Вспомним определение $\overline{P}(w,n,\delta,m)$ и поймём, что $\forall a,b\in\mathbb{YF},$ $k\in\mathbb{N}$: $(a,b,k)\in\overline{P}(w,n,\delta,m)$
$$k> \frac{\delta n}{2}\ge \frac{\delta \frac{\displaystyle 2K'}{\displaystyle\delta}}{\displaystyle2}=\frac{{\displaystyle 2K'}}{\displaystyle2}=K'.$$

Таким образом, мы получаем, что если $m,n\in\mathbb{N}_0:$ $m\ge n\ge \frac{\displaystyle 2 K'}{\displaystyle\delta}$, то $\forall a,b\in\mathbb{YF},$ $k\in\mathbb{N}$: $(a,b,k)\in\overline{P}(w,n,\delta,m)$
$${\frac{d(\varepsilon,2^kb)d(2^k,a)}{d(\varepsilon,w_m)}}<
{\left(\frac{2}{3}\right)^{k}}.$$

Давайте просуммируем данное выражение по $(a,b,k)\in\overline{P}(w,n,\delta,m)$. Для начала зафиксируем $a$ и $b$ и просуммируем по $k\in\mathbb{N}:$ $k>\frac{\delta n}{2}$:
$$\sum_{k\in\mathbb{N}: \; k>\frac{\delta n}{2}}{\frac{d\left(\varepsilon,2^kb\right)d\left(2^k,a\right)}{d(\varepsilon,w_m)}}<\sum_{k\in\mathbb{N}: \; k>\frac{\delta n}{2}}
{\left(\frac{2}{3}\right)^{k}}= \sum_{k=\left\lfloor\frac{\delta n}{2}\right\rfloor+1}^{\infty}
{\left(\frac{2}{3}\right)^{k}}= \left(\frac{2}{3}\right)^{\left\lfloor\frac{\delta n}{2}\right\rfloor+1} \sum_{k=0}^{\infty}
{\left(\frac{2}{3}\right)^{k}}=3\left(\frac{2}{3}\right)^{\lfloor\frac{\delta n}{2}\rfloor+1}.$$

Осталось просуммировать данное выражение по всем парам $(a,b)\in\mathbb{YF}^2:$ $ab=w_m$ и $|b|< (1-\delta)n$. Несложно заметить, что наше выражение не зависит от $a$ и $b$, а количество данных разбиений $w_m$ точно не больше, чем $n$, так как $|b|<(1-\delta)n< n\Longrightarrow |b|\in \overline{n-1}$.

А значит, если $m,n\in\mathbb{N}_0:$ $m\ge n\ge \frac{\displaystyle 2 K'}{\displaystyle\delta}$, то
$$ \sum_{(a,b,k)\in\overline{P}(w,n,\delta,m) }{\frac{d\left(\varepsilon,2^kb\right)d\left(2^k,a\right)}{d(\varepsilon,w_m)}}\le n\cdot\sum_{k\in\mathbb{N}_0: \; k>\frac{\delta n}{2}}{\frac{d\left(\varepsilon,2^kb\right)d\left(2^k,a\right)}{d(\varepsilon,w_m)}} <$$
$$<n\cdot3 \left(\frac{2}{3}\right)^{\left\lfloor\frac{\delta n}{2}\right\rfloor+1}=3n\left(\frac{2}{3}\right)^{\left\lfloor\frac{\delta n}{2}\right\rfloor+1}\le 3n \left(\frac{2}{3}\right)^{\frac{\delta n}{2}}= 3n \left(\left(\frac{2}{3}\right)^{\frac{\delta }{2}}\right)^n.$$

Итак, таким образом, мы доказали, что $\exists N=K'\in\mathbb{N}$: $\forall m,n\in\mathbb{N}_0:$ $m\ge n \ge N$
$$ \sum_{(a,b,k)\in\overline{P}(w,n,\delta,m) }{\frac{d\left(\varepsilon,2^kb\right)d\left(2^k,a\right)}{d(\varepsilon,w_m)}}< 3n \left(\left(\frac{2}{3}\right)^{\frac{\delta }{2}}\right)^n.$$

А это значит, что $\forall n\in\mathbb{N}_0:$ $ n\ge N$
$${\sum_{v\in \overline{P}(w,n,\delta)}\mu_w(v)}={\sum_{v\in\overline{P}(w,n,\delta)}\left(\lim_{m\to\infty}\frac{d(\varepsilon,v)d(v,w_m)}{d(\varepsilon,w_m)}\right)}=\lim_{m\to\infty}{\left(\sum_{v\in\overline{P}(w,n,\delta)}\frac{d(\varepsilon,v)d(v,w_m)}{d(\varepsilon,w_m)}\right)}\le$$
$$ \le\text{ (По Лемме \ref{ass} при $w\in\mathbb{YF}_\infty$, $\delta\in(0,1)$, $n,m\in\mathbb{N}_0$) }\le$$
$$\le \lim_{m\to\infty}\left( \sum_{(a,b,k)\in\overline{P}(w,n,\delta,m)}{\frac{d(\varepsilon,2^kb)d(2^k,a)}{d(\varepsilon,w_m)}}\right)\le 3n \left(\left(\frac{2}{3}\right)^{\frac{\delta }{2}}\right)^n.$$

Ясно, что $\forall w\in\mathbb{YF}_\infty$, $v\in\mathbb{YF}$
$$\mu_w(v)=\lim_{m \to \infty} \frac{d(\varepsilon,v)d(v,w_m)}{d(\varepsilon,w_m)}\ge 0 ,$$
а значит, при наших $w\in\mathbb{YF}_\infty^+,$ $\delta\in(0,1)$ $\forall n\in\mathbb{N}_0:$ $n\ge N$
$$ 3n \left(\left(\frac{2}{3}\right)^{\frac{\delta }{2}}\right)^n\ge  {\sum_{v\in \overline{P}(w,n,\delta)}\mu_w(v)}\ge 0.$$

Ясно, что
 $$\lim_{n\to\infty}\left( 3n \left(\left(\frac{2}{3}\right)^{\frac{\delta }{2}}\right)^n\right)=\lim_{n\to\infty}\left(\frac{\displaystyle 3n}{\displaystyle  \left(\left(\frac{\displaystyle 3}{\displaystyle 2}\right)^{\frac{\delta }{2}}\right)^n}\right)=0,$$
    так как экспонента растёт быстрее многочлена.

А значит, по Лемме о двух полицейских, 
$$\lim_{n \to \infty}{\sum_{v\in \overline{P}(w,n,\delta)}\mu_w(v)}=0,$$
что доказывает первый пункт.

Также заметим, что
\begin{itemize}
    \item $$\overline{P}(w,n,\delta)\cup P(w,n,\delta)=\left\{v\in\mathbb{YF}_n:\; {h'(v,w)}< (1 - \delta)n\right\}\cup \left\{v\in\mathbb{YF}_n:\; {h'(v,w)}\ge (1 - \delta)n\right\}=\mathbb{YF}_n;$$
    \item  $$\overline{P}(w,n,\delta)\cap P(w,n,\delta)=\left\{v\in\mathbb{YF}_n: \;{h'(v,w)}< (1 - \delta)n\right\}\cap \left\{v\in\mathbb{YF}_n:\; {h'(v,w)}\ge (1 - \delta)n\right\}=\varnothing;$$
    \item (Следствие \ref{mera1}) $\forall w\in\mathbb{YF}_\infty$ и $n\in\mathbb{N}_0$
    $${\sum_{v\in \mathbb{YF}_n}\mu_w(v)}=1.$$
\end{itemize}

Таким образом, можно сделать вывод, что
$$\lim_{n \to \infty}{\sum_{v\in P(w,n,\delta)}\mu_w(v)}=1,$$
то есть второй пункт доказан.

Оба пункта доказаны.

Теорема доказана.

\end{proof}

\newpage

\section{Доказательство Следствия \ref{main1}}

\begin{Oboz}
Пусть $w\in\mathbb{YF}_\infty$, $n,l\in\mathbb{N}_0$. Тогда
\begin{itemize}
    \item $$Q(w,n,l):=\left\{v\in\mathbb{YF}_n:\; {h'(v,w)}\ge l\right\};$$
    \item $$ \overline{Q}(w,n,l):=\left\{v\in\mathbb{YF}_n:\; {h'(v,w)}< l\right\}.$$
\end{itemize}
\end{Oboz}

\renewcommand{\labelenumi}{\arabic{enumi}$)$}
\renewcommand{\labelenumii}{\arabic{enumi}.\arabic{enumii}$^\circ$}
\renewcommand{\labelenumiii}{\arabic{enumi}.\arabic{enumii}.\arabic{enumiii}$^\circ$}

\begin{Col}[Из Теоремы \ref{t1}] \label{main1}
Пусть $w\in\mathbb{YF}_\infty^+$, $l\in\mathbb{N}_0$. Тогда
\begin{enumerate}
    \item $$ \lim_{n \to \infty}{\sum_{v\in \overline{Q}(w,n,l)}\mu_w(v)=0};$$
    \item $$\lim_{n \to \infty}{\sum_{v\in Q(w,n,l)}\mu_w(v)=1}.$$
\end{enumerate}

\renewcommand{\labelenumi}{\arabic{enumi}$^\circ$}
\renewcommand{\labelenumii}{\arabic{enumi}.\arabic{enumii}$^\circ$}
\renewcommand{\labelenumiii}{\arabic{enumi}.\arabic{enumii}.\arabic{enumiii}$^\circ$}

\end{Col}
\begin{proof}
Давайте применим Теорему \ref{t1} (первый пункт) при $\delta=\frac{1}{2}$ и том же $w\in\mathbb{YF}_\infty^+$:
$$\lim_{n \to \infty}{\sum_{v\in \overline{P}\left(w,n,\frac{1}{2}\right)}\mu_w(v)=0}.$$
    
Несложно заметить, что при $n\in\mathbb{N}_0:n\ge 2l$
$$\overline{P}\left(w,n,\frac{1}{2}\right)=\left\{v\in\mathbb{YF}_n:\; {h'(v,w)}< \left(1-\frac{1}{2}\right)n\right\}=\left\{v\in\mathbb{YF}_n:\; {h'(v,w)}< \frac{1}{2}n\right\}\supseteq$$
$$\supseteq \left\{v\in\mathbb{YF}_n:\; {h'(v,w)}< l\right\}=\overline{Q}(w,n,l).$$

Ясно, что $\forall w\in\mathbb{YF}_\infty$, $v\in\mathbb{YF}$
$$\mu_w(v)=\lim_{m \to \infty} \frac{d(\varepsilon,v)d(v,w_m)}{d(\varepsilon,w_m)}\ge 0,$$
а значит, при наших $w\in\mathbb{YF}_\infty^+,$ $l\in\mathbb{N}_0$ $\forall n\in\mathbb{N}_0:$ $n\ge 2l$
$${\sum_{v\in \overline{P}\left(w,n,\frac{1}{2}\right)}\mu_w(v)}\ge {\sum_{v\in \overline{Q}(w,n,l)}\mu_w(v)} \ge 0.$$

А значит, по Лемме о двух полицейских, 
$$\lim_{n \to \infty}{\sum_{v\in \overline{Q}(w,n,l)}\mu_w(v)}=0,$$
что доказывает первый пункт.

Также заметим, что
\begin{itemize}
    \item $$\overline{Q}\left(w,n,l\right)\cup Q\left(w,n,l\right)=\left\{v\in\mathbb{YF}_n:\; {h'(v,w)}< l\right\}\cup \left\{v\in\mathbb{YF}_n:\; {h'(v,w)}\ge l\right\}=\mathbb{YF}_n;$$
    \item $$\overline{Q}\left(w,n,l\right)\cap Q\left(w,n,l\right)=\left\{v\in\mathbb{YF}_n:\; {h'(v,w)}< l\right\}\cap \left\{v\in\mathbb{YF}_n:\; {h'(v,w)}\ge l\right\}=\varnothing;$$
    \item (Следствие \ref{mera1}) $\forall w\in\mathbb{YF}_\infty$ и $n\in\mathbb{N}_0$
    $${\sum_{v\in \mathbb{YF}_n}\mu_w(v)}=1.$$
\end{itemize}
Таким образом, можно сделать вывод, что
$$\lim_{n \to \infty}{\sum_{v\in Q(w,n,l)}\mu_w(v)}=1,$$
что доказывает второй пункт.

Таким образом, оба пункта доказаны.

Следствие доказано.

\end{proof}

\newpage

\section{Доказательство Следствия \ref{main2}}

\begin{Oboz}
Пусть $w\in\mathbb{YF}_\infty$, $n\in\mathbb{N}_0$, $\varepsilon\in\mathbb{R}_{>0}$. Тогда
\begin{itemize}
    \item $$R(w,n,\varepsilon):=\left\{v\in\mathbb{YF}_n:\; \pi(v)\in(\pi(w)(1-\varepsilon),\pi(w)(1+\varepsilon)) \right\};$$
    \item $$\overline{R}(w,n,\varepsilon):=\left\{v\in\mathbb{YF}_n:\; \pi(v)\notin(\pi(w)(1-\varepsilon),\pi(w)(1+\varepsilon)) \right\}.$$
\end{itemize}
\end{Oboz}
\begin{Col}[Из Теоремы \ref{t1}] \label{main2}
Пусть $w\in\mathbb{YF}_\infty^+$, $\varepsilon\in\mathbb{R}_{>0}$. Тогда
\begin{enumerate}
    \item $$ \lim_{n \to \infty}{\sum_{v\in \overline{R}(w,n,\varepsilon)}\mu_w(v)=0};$$
    \item $$\lim_{n \to \infty}{\sum_{v\in R(w,n,\varepsilon)}\mu_w(v)=1}.$$
\end{enumerate}
\end{Col}
\begin{proof}
Давайте докажем, что при данных $w\in\mathbb{YF}_\infty^+$ и $\varepsilon\in\mathbb{R}_{>0}$ существуют $\delta\in(0,1)$ и $N\in\mathbb{N}_0$: при любом $n\in\mathbb{N}_0:$ $n\ge N$ 
$$R(w,n,\varepsilon) \supseteq P(w,n,\delta).$$

\renewcommand{\labelenumi}{\arabic{enumi}$^\circ$}
\renewcommand{\labelenumii}{\arabic{enumi}.\arabic{enumii}$^\circ$}
\renewcommand{\labelenumiii}{\arabic{enumi}.\arabic{enumii}.\arabic{enumiii}$^\circ$}

Для этого докажем следующие утверждения:
\begin{Prop} \label{meexy1} 
Пусть $v,v',v''\in\mathbb{YF}$: $v=v'v''$. Тогда
$$\pi(v)=\pi(v'')\pi\left(v'1^{|v''|}\right).$$
\end{Prop}
\begin{proof}
Рассмотрим три случая:
\begin{enumerate}
    \item  $v''=\varepsilon$.
    
    Ясно, что в данном случае $v=v'.$ Посчитаем, воспользовавшись определением функций $\pi$ и $g$:
    $$\pi(v)=\pi(v')=\left(\prod_{i:\;g(\varepsilon,i)> 1  } \frac{g(\varepsilon,i)-1}{g(\varepsilon,i)}\right)\pi(v')=\pi(\varepsilon)\pi(v')=$$
    $$=\pi(\varepsilon)\pi\left(v'1^0\right)=\pi(\varepsilon)\pi\left(v'1^{|\varepsilon|}\right)=\pi(v'')\pi\left(v' 1^{|v''|}\right).$$
    В данном случае Утверждение доказано.
    
    \item номер $v''$ заканчивается на $1$.
    
    Ясно, что в данном случае номер $v$ тоже заканчивается на $1$, а также то, что $|v''|\ge 1$. Посчитаем, воспользовавшись определением функций $\pi$ и $g$:
    $$\pi(v)=\prod_{i:\;g(v,i)> 1  } \frac{g(v,i)-1}{g(v,i)}=\prod_{i=1}^{d(v)} \frac{g(v,i)-1}{g(v,i)}= $$
    $$=\left(\prod_{i=1}^{d(v'')} \frac{g(v,i)-1}{g(v,i)}\right)\left(\prod_{i=d(v'')+1}^{d(v)} \frac{g(v,i)-1}{g(v,i)}\right)=$$
    $$=\left(\prod_{i=1}^{d(v'')} \frac{g(v'',i)-1}{g(v'',i)}\right)\left(\prod_{i=1}^{d\left(v'1^{|v''|}\right)} \frac{g(v'1^{|v''|},i)-1}{g\left(v'1^{|v''|},i\right)}\right)=$$
    $$=\left(\prod_{i:\;g(v'',i)> 1  } \frac{g(v'',i)-1}{g(v'',i)}\right)\left(\prod_{i:\;g\left(v'1^{|v''|},i\right)> 1  } \frac{g(v'1^{|v''|},i)-1}{g\left(v'1^{|v''|},i\right)}\right)=$$
    $$=\pi(v'')\pi\left(v'1^{|v''|}\right).$$
    В данном случае Утверждение доказано.
    
    \item номер $v''$ заканчивается на $2$.
    
    Ясно, что в данном случае номер $v$ тоже заканчивается на $2$, а также то, что $|v''|\ge 2$. Посчитаем, воспользовавшись определением функций $\pi$ и $g$:
    $$\pi(v)=\prod_{i:\;g(v,i)> 1  } \frac{g(v,i)-1}{g(v,i)}=\prod_{i=2}^{d(v)} \frac{g(v,i)-1}{g(v,i)}= $$
    $$=\left(\prod_{i=2}^{d(v'')} \frac{g(v,i)-1}{g(v,i)}\right)\left(\prod_{i=d(v'')+1}^{d(v)} \frac{g(v,i)-1}{g(v,i)}\right)=$$
    $$=\left(\prod_{i=2}^{d(v'')} \frac{g(v'',i)-1}{g(v'',i)}\right)\left(\prod_{i=1}^{d\left(v'1^{|v''|}\right)} \frac{g(v'1^{|v''|},i)-1}{g\left(v'1^{|v''|},i\right)}\right)=$$
    $$=\left(\prod_{i:\;g(v'',i)> 1  } \frac{g(v'',i)-1}{g(v'',i)}\right)\left(\prod_{i:\;g\left(v'1^{|v''|},i\right)> 1  } \frac{g(v'1^{|v''|},i)-1}{g\left(v'1^{|v''|},i\right)}\right)=$$
    $$=\pi(v'')\pi\left(v'1^{|v''|}\right).$$
    В данном случае Утверждение доказано.

\end{enumerate}

Все случаи разобраны.

Утверждение доказано.

\end{proof}

\begin{Prop}\label{pashaplohoi}
Пусть $w\in\mathbb{YF}_\infty^+$, $\delta\in(0,1)$, $\varepsilon\in\mathbb{R}_{>0}$. Тогда $\exists N''\in\mathbb{N}_0$: $\forall n\in\mathbb{N}_0,$ $v\in\mathbb{YF}:$ $n\ge N''$, $v\in P(w,n,\delta)$
$$\frac{\pi(v)}{\pi(w)} < 1+\varepsilon.$$
\end{Prop}
\begin{proof}
Ясно, что то, что $w\in\mathbb{YF}_\infty^+$ означает, что выражение $\frac{\displaystyle\pi(v)}{\displaystyle\pi(w)}$ определено.

Давайте считать, что при нашем $w\in\mathbb{YF}_\infty^+$ и произвольно выбранной вершине $v\in\mathbb{YF}$ $$w=w'w'';\quad v=v'w'',$$
где $w''$ -- самый длинный общий суффикс $v$ и $w$. Ясно, что тут $w'\in\mathbb{YF}_\infty$, $v',w''\in\mathbb{YF}$. Кроме того, из определения функции $h'$ ясно, что  $|w''|=h'(v,w)$.

Значит,
$$\frac{\pi(v)}{\pi(w)}=\left(\text{По Утверждению \ref{meexy1} при $v,v',w''\in\mathbb{YF}$, а также при $w,w',w''\in\mathbb{YF}$}\right)=$$
$$=\frac{\pi\left(w''\right)\pi\left(v'1^{\left|w''\right|}\right)}{\pi\left(w''\right)\pi\left(w'1^{\left|w''\right|}\right)}=\frac{\pi\left(v'1^{\left|w''\right|}\right)}{\pi\left(w'1^{\left|w''\right|}\right)}\le\left(\text{По Замечанию \ref{promezhutok} при $\left(v'1^{\left|w''\right|}\right)\in\mathbb{YF}$}\right) \le $$
$$\le\frac{1}{\pi\left(w'1^{\left|w''\right|}\right)}=\frac{\displaystyle1}{\displaystyle \prod_{i:\;g\left(w'1^{\left|w''\right|},i\right)>1 } \frac{ g\left(w'1^{\left|w''\right|},i\right)-1}{g\left(w'1^{\left|w''\right|},i\right)}}=$$
$$=(\text{Следует из определения функции $g$})=\frac{\displaystyle1}{\displaystyle \prod_{i:\;g(w,i)>\max\left(1,\left|w''\right|\right)  } \frac{ g(w,i)-1}{g(w,i)}}.$$

Мы знаем, что $w\in\mathbb{YF}_\infty^+$, а это значит, что 
$$\prod_{i:\;g(w,i)>1  } \frac{ g(w,i)-1}{g(w,i)}>0,$$
а это значит, что $\forall \varepsilon'\in\mathbb{R}_{>0}$ $\exists n'\in\mathbb{N}_0:$ $n'\ge 1$ и
$$\prod_{i:\;g(w,i)>n'  } \frac{ g(w,i)-1}{g(w,i)}>1-\varepsilon',$$
а это, в свою очередь, значит, что при нашем $\varepsilon\in\mathbb{R}_{>0}$ существует $ n'\in\mathbb{N}_0:n'\ge 1$ и
$$\frac{\displaystyle1}{\displaystyle\prod_{i:\;g(w,i)>n'  } \frac{ g(w,i)-1}{g(w,i)}}<1+\varepsilon.$$

Пусть $N''= \left\lceil\frac{n'}{(1-\delta)}\right\rceil\in\mathbb{N}_0$. Тогда если $v\in P(w,n,\delta)$ при $n\in\mathbb{N}_0:$ $n\ge N''$, то, по определению $P(w,n,\delta)$, $$|w''|=h'(v,w)\ge (1-\delta)n\ge (1-\delta)N''= (1-\delta)\left\lceil\frac{n'}{(1-\delta)}\right\rceil\ge (1-\delta)\frac{n'}{(1-\delta)}=n',$$
то есть $$\frac{\displaystyle1}{\displaystyle\prod_{i:\;g(w,i)>|w''|  } \frac{ g(w,i)-1}{g(w,i)}}\le\frac{\displaystyle1}{\displaystyle\prod_{i:\;g(w,i)>n'  } \frac{ g(w,i)-1}{g(w,i)}}< 1+\varepsilon,$$
а значит, если $v\in P(w,n,\delta)$ при $n\in\mathbb{N}_0:n\ge N''$, то
$$\frac{\pi(v)}{\pi(w)}< 1+\varepsilon,$$
что и требовалось.

Утверждение доказано.
\end{proof}

\begin{Prop} \label{podotriopyat}
Пусть  $w\in\mathbb{YF}_\infty^+$, $\delta\in(0,1)$, $n\in\mathbb{N}_0,$  $v\in\mathbb{YF}:$ $n\ge 1,$ $v\in P(w,n,\delta)$. Тогда
$${\displaystyle \prod_{i=1}^{\left\lceil\frac{\delta n}{2}\right\rceil} \frac{ g\left(2^{\left\lceil\frac{\delta n
}{2}\right\rceil} 1^{2\left\lceil\frac{(1-\delta)n}{2}\right\rceil-1},i\right)-1}{g\left(2^{\left\lceil\frac{\delta n
}{2}\right\rceil} 1^{2\left\lceil\frac{(1-\delta)n}{2}\right\rceil-1},i\right)}}
\le\frac{\pi(v)}{\pi(w)}.$$
\end{Prop}
\begin{proof}
Ясно, что то, что $w\in\mathbb{YF}_\infty^+$ означает, что выражение $\frac{\displaystyle\pi(v)}{\displaystyle\pi(w)}$ определено.

Давайте считать, что при нашем $w\in\mathbb{YF}_\infty^+$ и произвольно выбранной вершине $v\in\mathbb{YF}$ $$w=w'w'';\quad v=v'w'',$$
где $w''$ -- самый длинный общий суффикс $v$ и $w$. Ясно, что тут $w'\in\mathbb{YF}_\infty$, $v',w''\in\mathbb{YF}$. Кроме того, из определения функции $h'$ ясно, что  $|w''|=h'(v,w)$.

Значит
$$\frac{\pi(v)}{\pi(w)}=\left(\text{По Утверждению \ref{meexy1} при $v,v',w''\in\mathbb{YF}$, а также при $w,w',w''\in\mathbb{YF}$}\right)=$$
$$=\frac{\pi\left(w''\right)\pi\left(v'1^{\left|w''\right|}\right)}{\pi\left(w''\right)\pi\left(w'1^{\left|w''\right|}\right)}=\frac{\pi\left(v'1^{\left|w''\right|}\right)}{\pi\left(w'1^{\left|w''\right|}\right)}\ge\left(\text{По Замечанию \ref{promezhutok} при $\left(w'1^{\left|w''\right|}\right)\in\mathbb{YF}$}\right) \ge $$
$$\ge\pi\left(v'1^{\left|w''\right|}\right)={\displaystyle \prod_{i:\;g\left(v'1^{\left|w''\right|},i\right)>1 } \frac{ g\left(v'1^{|w''|},i\right)-1}{g\left(v'1^{\left|w''\right|},i\right)}}.$$

Вспомним, что $v\in P(w,n,\delta)$ при $w\in\mathbb{YF}_\infty^+$, $n\in\mathbb{N}_0:$ $n\ge 1$ и $\delta\in(0,1)$. А это значит, что
$$|w''|=h'(v,w)\ge (1-\delta)n>0,$$
то есть (так как $|w''|\in\mathbb{N}_0$) $$|w''|\ge 1.$$
Таким образом, из определения функции $g$ ясно, что $\forall i\in\left\{1,\ldots,d\left(v'1^{\left|w''\right|}\right)\right\}$
$$g\left(v'1^{|w''|},i\right)\ge 2.$$
Таким образом, наше выражение равняется следующему:
$${\displaystyle \prod_{i=1}^{d\left(v'1^{\left|w''\right|}\right)} \frac{ g\left(v'1^{\left|w''\right|},i\right)-1}{g\left(v'1^{\left|w''\right|},i\right)}}={\displaystyle \prod_{i=1}^{d\left(v'\right)} \frac{ g\left(v'1^{\left|w''\right|},i\right)-1}{g\left(v'1^{\left|w''\right|},i\right)}}.$$

Вспомним, что $v\in P(w,n,\delta)$ при $w\in\mathbb{YF}_\infty^+$, $n\in\mathbb{N}_0:$ $n\ge 1$ и $\delta\in(0,1)$. А это значит, что
$$\left|w''\right|=h'(v,w)\ge (1-\delta)n>0.$$

Также ясно, что
$$(1-\delta)n+1= 2\frac{(1-\delta)n}{2}+2-1 >2\left\lceil\frac{(1-\delta)n}{2}\right\rceil-1\ge 2-1=1\Longrightarrow$$
$$\Longrightarrow\left(\text{так как }\left(2\left\lceil\frac{(1-\delta)n}{2}\right\rceil-1\right)\in\mathbb{Z}\right)\Longrightarrow (1-\delta)n\ge 2\left\lceil\frac{(1-\delta)n}{2}\right\rceil-1\ge1.$$

Делаем вывод, что $$|w''|=h'(v,w)\ge(1-\delta)n\ge 2\left\lceil\frac{(1-\delta)n}{2}\right\rceil-1\ge1.$$
Таким образом, из определения функции $g$ ясно, что $\forall i\in\left\{1,\ldots,d\left(v'1^{\left|w''\right|}\right)\right\}=\left\{1,\ldots,d\left(v'\right)\right\}=\left\{1,\ldots,d\left(v'1^{2\left\lceil\frac{(1-\delta)n}{2}\right\rceil-1}\right)\right\}$
$$g\left(v'1^{\left|w''\right|},i\right)\ge g\left(v'1^{2\left\lceil\frac{(1-\delta)n}{2}\right\rceil-1},i\right)\ge 2.$$
Таким образом, делаем вывод, что
$${\displaystyle \prod_{i=1}^{d\left(v'\right)} \frac{ g\left(v'1^{\left|w''\right|},i\right)-1}{g\left(v'1^{\left|w''\right|},i\right)}}\ge {\displaystyle \prod_{i=1}^{d\left(v'\right)} \frac{ g\left(v'1^{2\left\lceil\frac{(1-\delta)n}{2}\right\rceil-1},i\right)-1}{g\left(v'1^{2\left\lceil\frac{(1-\delta)n}{2}\right\rceil-1},i\right)}}\ge $$
$$\ge(\text{По определению функции $g$})\ge  {\displaystyle \prod_{i=1}^{\left\lceil\frac{|v'|}{2}\right\rceil} \frac{ g\left(2^{\left\lceil\frac{|v'|}{2}\right\rceil} 1^{2\left\lceil\frac{(1-\delta)n}{2}\right\rceil-1},i\right)-1}{g\left(2^{\left\lceil\frac{|v'|}{2}\right\rceil} 1^{2\left\lceil\frac{(1-\delta)n}{2}\right\rceil-1},i\right)}}.$$
Вспомним, что $v\in P(w,n,\delta)$ при $w\in\mathbb{YF}_\infty^+$, $n\in\mathbb{N}_0:$ $n\ge 1$ и $\delta\in(0,1)$. А это значит, что
$$|v'|=n-|w''|=n-h'(v,w)\le n- (1-\delta)n=\delta n\Longrightarrow\frac{|v'|}{2} \le \frac{ \delta n}{2}\Longrightarrow \left\lceil\frac{|v'|}{2}\right\rceil \le \left\lceil\frac{\delta n}{2}\right\rceil,$$
что, в свою очередь, значит, что
$${\displaystyle \prod_{i=1}^{\left\lceil\frac{|v'|}{2}\right\rceil} \frac{ g\left(2^{\left\lceil\frac{|v'|}{2}\right\rceil} 1^{2\left\lceil\frac{(1-\delta)n}{2}\right\rceil-1},i\right)-1}{g\left(2^{\left\lceil\frac{|v'|}{2}\right\rceil} 1^{2\left\lceil\frac{(1-\delta)n}{2}\right\rceil-1},i\right)}}\ge {\displaystyle \prod_{i=1}^{\left\lceil\frac{\delta n}{2}\right\rceil} \frac{ g\left(2^{\left\lceil\frac{\delta n
}{2}\right\rceil} 1^{2\left\lceil\frac{(1-\delta)n}{2}\right\rceil-1},i\right)-1}{g\left(2^{\left\lceil\frac{\delta n
}{2}\right\rceil} 1^{2\left\lceil\frac{(1-\delta)n}{2}\right\rceil-1},i\right)}}.$$

Таким образом, мы доказали, что
$$\frac{\pi(v)}{\pi(w)}\ge{\displaystyle \prod_{i=1}^{\left\lceil\frac{\delta n}{2}\right\rceil} \frac{ g\left(2^{\left\lceil\frac{\delta n
}{2}\right\rceil} 1^{2\left\lceil\frac{(1-\delta)n}{2}\right\rceil-1},i\right)-1}{g\left(2^{\left\lceil\frac{\delta n
}{2}\right\rceil} 1^{2\left\lceil\frac{(1-\delta)n}{2}\right\rceil-1},i\right)}},$$
что и требовалось.

Утверждение доказано.
\end{proof}
\begin{Lemma}\label{truefrick}
   Пусть $w\in\mathbb{YF}_\infty^+$, $\varepsilon\in\mathbb{R}_{>0}$. Тогда $\exists N'\in \mathbb{N}_0$, $\delta\in(0,1):$ $\forall n\in\mathbb{N}_0,$ $v\in\mathbb{YF}:$ $n\ge N',$ $v\in P(w,n,\delta)$
    $$1-\varepsilon<\frac{\pi(v)}{\pi(w)}.$$
\end{Lemma}
\begin{proof}
По Утверждению \ref{podotriopyat} при нашем $w\in\mathbb{YF}_\infty^+$ и произвольных $n\in\mathbb{N}_0,$ $\delta\in(0,1)$, $v\in\mathbb{YF}:$ $n\ge 1,$ $ v\in P(w,n,\delta)$ 
$$ {\displaystyle \prod_{i=1}^{\left\lceil\frac{\delta n}{2}\right\rceil} \frac{ g\left(2^{\left\lceil\frac{\delta n
}{2}\right\rceil} 1^{2\left\lceil\frac{(1-\delta)n}{2}\right\rceil-1},i\right)-1}{g\left(2^{\left\lceil\frac{\delta n
}{2}\right\rceil} 1^{2\left\lceil\frac{(1-\delta)n}{2}\right\rceil-1},i\right)}}\le\frac{\pi(v)}{\pi(w)}.$$

Пусть $\delta=\min\left(\frac{\varepsilon}{2},\frac{1}{2}\right)$. Ясно, что $\delta\in(0,1)$.

Теперь введём следующие обозначения: 
\begin{itemize}
    \item $$a=a(n)=\left\lceil\frac{(1-\delta)n}{2}\right\rceil;$$
    \item $$b=b(n)=\left\lceil\frac{\delta n}{2}\right\rceil.$$
\end{itemize}

Ясно, что 
\begin{itemize}
    \item $$a(n)=\left\lceil\frac{(1-\delta)n}{2}\right\rceil\ge \frac{(1-\delta)n}{2}\xrightarrow{n\to\infty}\infty\Longrightarrow a(n)\xrightarrow{n\to \infty} \infty;$$
    \item $$b(n)=\left\lceil\frac{\delta n}{2}\right\rceil\ge \frac{\delta n}{2}\xrightarrow{n\to\infty}\infty\Longrightarrow b(n)\xrightarrow{n\to \infty}\infty.$$
\end{itemize}

Утверждение \ref{podotriopyat} при нашем $w\in\mathbb{YF}_\infty^+$ и выбранном $\delta\in(0,1)$ в новых обозначениях принимает следующий вид (помним про определение функции $g$):

При произвольных $n\in\mathbb{N}_0,$ $v\in\mathbb{YF}:$ $n\ge 1,$ $v\in P(w,n,\delta)$ 
$${\displaystyle \prod_{i=a(n)}^{a(n)+b(n)-1}\frac{2i-1}{2i}}\le \frac{\pi(v)}{\pi(w)}.$$

Давайте найдём
$$\lim_{n\to\infty}\left(\prod_{i=a(n)}^{a(n)+b(n)-1}\frac{2i-1}{2i}\right).$$
    
Посчитаем, держа в голове, что $a(n)\xrightarrow{n \to \infty}\infty$ и $b(n)\xrightarrow{n \to \infty}\infty,$ что в частности значит, что мы можем рассматривать только такие $n$, что $a(n)\ge 2$ и $b(n)\ge 1$:
$$\lim_{n\to\infty}\left(\prod_{i=a}^{a+b-1}\frac{2i-1}{2i}\right)=\lim_{n\to\infty}\left(\prod_{i=a}^{a+b-1}\frac{(2i-1)2i}{(2i)^2}\right)=\lim_{n\to\infty}\left(\frac{\displaystyle\prod_{i=2a-1}^{2a+2b-2}i}{\displaystyle\left(2^b\prod_{i=a}^{a+b-1}i\right)^2}\right)=$$
$$=\lim_{n\to\infty}\left(\frac{\displaystyle\frac{(2a+2b-2)!}{(2a-2)!}}{\displaystyle2^{2b}\left(\frac{(a+b-1)!}{(a-1)!}\right)^2}\right)=\lim_{n\to\infty}\left(\frac{1}{2^{2b}}\left(\frac{(a-1)!}{(a+b-1)!}\right)^2\frac{(2a+2b-2)!}{(2a-2)!}\right).$$

По формуле Стирлинга данное выражение равняется следующему выражению при некоторых $\theta_{a+b-1},\theta_{a-1},\theta_{2a-2},\theta_{2a+2b-2}\in(0,1)$ (тут важно, что мы рассматривает только такие $n$, что $a(n)\ge 2$ и $b(n)\ge 1$, что, в свою очередь, значит, что $a+b-1\ge 1;$ $a-1\ge 1;$ $2a-2\ge 1;$ $2a+2b-2\ge 1$):
$$\lim_{n\to\infty}\left(\frac{1}{2^{2b}}\frac{{2\pi(a-1)}\left(\frac{(a-1)}{e}\right)^{2a-2}\left(\exp{\frac{\theta_{a-1}}{12(a-1)}}\right)^2}{{2\pi(a+b-1)}\left(\frac{(a+b-1)}{e}\right)^{2a+2b-2}\left(\exp{\frac{\theta_{a+b-1}}{12(a+b-1)}}\right)^2}\cdot\right.$$
$$\left.
\cdot\frac{\sqrt{2\pi(2a+2b-2)}\left(\frac{(2a+2b-2)}{e}\right)^{2a+2b-2}\exp{\frac{\theta_{2a+2b-2}}{12(2a+2b-2)}}}{\sqrt{2\pi(2a-2)}\left(\frac{(2a-2)}{e}\right)^{2a-2}\exp{\frac{\theta_{2a-2}}{12(2a-2)}}}\right)=$$
$$=\lim_{n\to\infty}\left(\frac{1}{2^{2b}}\frac{{(a-1)}\left({a-1}\right)^{2a-2}\left(\exp{\frac{\theta_{a-1}}{12(a-1)}}\right)^2}{{(a+b-1)}\left({a+b-1}\right)^{2a+2b-2}\left(\exp{\frac{\theta_{a+b-1}}{12(a+b-1)}}\right)^2}\cdot\right.$$$$\left.\cdot\frac{\sqrt{(2a+2b-2)}\left({2a+2b-2}\right)^{2a+2b-2}\exp{\frac{\theta_{2a+2b-2}}{12(2a+2b-2)}}}{\sqrt{(2a-2)}\left({2a-2}\right)^{2a-2}\exp{\frac{\theta_{2a-2}}{12(2a-2)}}}\right).$$

Для начала рассмотрим
$$\frac{\left(\exp{\frac{\theta_{a-1}}{12(a-1)}}\right)^2}{\left(\exp{\frac{\theta_{a+b-1}}{12(a+b-1)}}\right)^2}\frac{\exp{\frac{\theta_{2a+2b-2}}{12(2a+2b-2)}}}{\exp{\frac{\theta_{2a-2}}{12(2a-2)}}}.$$
Мы уже поняли, что $a(n)\xrightarrow{n \to \infty}\infty$ и $b(n)\xrightarrow{n \to \infty}\infty,$ а это значит, что (помним, что если $ i\in\mathbb{N}$, то $\theta_i\in(0,1)$)
\begin{itemize}
    \item $$\lim_{n\to\infty}\left(a(n)-1\right)=\infty \Longrightarrow\lim_{n\to\infty}{\frac{\theta_{a-1}}{12(a-1)}}=0\Longleftrightarrow \lim_{n\to\infty}{\left(\exp\frac{\theta_{a-1}}{12(a-1)}\right)}=1;$$
    \item $$\lim_{n\to\infty}\left(a(n)+b(n)-1\right)=\infty \Longrightarrow\lim_{n\to\infty}{\frac{\theta_{a+b-1}}{12(a+b-1)}}=0\Longleftrightarrow \lim_{n\to\infty}{\left(\exp\frac{\theta_{a+b-1}}{12(a+b-1)}\right)}=1;$$
    \item $$\lim_{n\to\infty}\left(2a(n)+2b(n)-2\right)=\infty \Longrightarrow\lim_{n\to\infty}{\frac{\theta_{2a+2b-2}}{12(2a+2b-2)}}=0\Longleftrightarrow \lim_{n\to\infty}{\left(\exp\frac{\theta_{2a+2b-2}}{12(2a+2b-2)}\right)}=1;$$
    \item $$\lim_{n\to\infty}\left(2a(n)-2\right)=\infty \Longrightarrow\lim_{n\to\infty}{\frac{\theta_{2a-2}}{12(2a-2)}}=0\Longleftrightarrow \lim_{n\to\infty}{\left(\exp\frac{\theta_{2a-2}}{12(2a-2)}\right)}=1.$$
    А из этого следует, что
\end{itemize}
$$\lim_{n\to\infty}\frac{\left(\exp{\frac{\theta_{a-1}}{12(a-1)}}\right)^2}{\left(\exp{\frac{\theta_{a+b-1}}{12(a+b-1)}}\right)^2}\frac{\exp{\frac{\theta_{2a+2b-2}}{12(2a+2b-2)}}}{\exp{\frac{\theta_{2a-2}}{12(2a-2)}}}=1.$$

Таким образом, мы поняли, что
$$\lim_{n\to\infty}\left(\frac{1}{2^{2b}}\frac{{(a-1)}\left({a-1}\right)^{2a-2}\left(\exp{\frac{\theta_{a-1}}{12(a-1)}}\right)^2}{{(a+b-1)}\left({a+b-1}\right)^{2a+2b-2}\left(\exp{\frac{\theta_{a+b-1}}{12(a+b-1)}}\right)^2}\cdot\right.$$
$$\left.\cdot\frac{\sqrt{(2a+2b-2)}\left({2a+2b-2}\right)^{2a+2b-2}\exp{\frac{\theta_{2a+2b-2}}{12(2a+2b-2)}}}{\sqrt{(2a-2)}\left({2a-2}\right)^{2a-2}\exp{\frac{\theta_{2a-2}}{12(2a-2)}}}\right)=$$
$$=\lim_{n\to\infty}\left(\frac{1}{2^{2b}}\cdot\frac{{(a-1)}\left({a-1}\right)^{2a-2}}{{(a+b-1)}\left({a+b-1}\right)^{2a+2b-2}}\cdot\frac{\sqrt{(2a+2b-2)}\left({2a+2b-2}\right)^{2a+2b-2}}{\sqrt{(2a-2)}\left({2a-2}\right)^{2a-2}}\right)=$$
$$=\lim_{n\to\infty}\left(\frac{{(a-1)}\left({a-1}\right)^{2a-2}}{{(a+b-1)}\left({a+b-1}\right)^{2a+2b-2}}\cdot\frac{\sqrt{(a+b-1)}\left({a+b-1}\right)^{2a+2b-2}}{\sqrt{(a-1)}\left({a-1}\right)^{2a-2}}\right)=$$
$$=\lim_{n\to\infty}\left(\frac{{(a-1)}}{{(a+b-1)}}\cdot\frac{\sqrt{(a+b-1)}}{\sqrt{(a-1)}}\right)=\lim_{n\to\infty}\left(\sqrt{\frac{a-1}{a+b-1}}\right)=\lim_{n\to\infty}\left(\sqrt{\frac{a(n)-1}{a(n)+b(n)-1}}\right)=$$
$$=\lim_{n\to\infty}\left(\sqrt{\frac{\left\lceil\frac{(1-\delta)n}{2}\right\rceil-1}{\left\lceil\frac{(1-\delta)n}{2}\right\rceil+\left\lceil\frac{\delta n}{2}\right\rceil-1}}\right).$$

Заметим, что
\begin{itemize}
    \item $$1-\frac{2}{(1-\delta)n}=\frac{(1-\delta)n-2}{(1-\delta)n}= \frac{\frac{(1-\delta)n}{2}-1}{\frac{(1-\delta)n}{2}} \le \frac{\left\lceil\frac{(1-\delta)n}{2}\right\rceil-1}{\frac{(1-\delta)n}{2}}\le  \frac{\frac{(1-\delta)n}{2}}{\frac{(1-\delta)n}{2}} =1,$$
    $$ \lim_{n\to\infty}\left(1-\frac{2}{(1-\delta)n}\right)=1\Longrightarrow \lim_{n\to\infty}\frac{\left\lceil\frac{(1-\delta)n}{2}\right\rceil-1}{\frac{(1-\delta)n}{2}}=1;$$
    \item $$1-\frac{2}{n+2}= {\frac{n}{n+2}}={\frac{\frac{n}{2}}{\frac{(1-\delta)n}{2}+\frac{\delta n}{2}+1}} \le {\frac{\frac{n}{2}}{\left\lceil\frac{(1-\delta)n}{2}\right\rceil+\left\lceil\frac{\delta n}{2}\right\rceil-1}}\le {\frac{\frac{n}{2}}{\frac{(1-\delta)n}{2}+\frac{\delta n}{2}-1}}=\frac{n}{n-2}=1+\frac{2}{n-2},$$
    $$\lim_{n\to\infty}\left(1-\frac{2}{n+2}\right)=1,\quad\lim_{n\to\infty}\left(1+\frac{2}{n-2}\right)=1\Longrightarrow \lim_{n\to\infty}\left({\frac{\frac{n}{2}}{\left\lceil\frac{(1-\delta)n}{2}\right\rceil+\left\lceil\frac{\delta n}{2}\right\rceil-1}}\right)=1.$$
\end{itemize}
    А значит, 
    $$\lim_{n\to\infty}\left(\sqrt{\frac{\left\lceil\frac{(1-\delta)n}{2}\right\rceil-1}{\left\lceil\frac{(1-\delta)n}{2}\right\rceil+\left\lceil\frac{\delta n}{2}\right\rceil-1}}\right)=\lim_{n\to\infty}\left(\sqrt{\frac{\left\lceil\frac{(1-\delta)n}{2}\right\rceil-1}{\frac{(1-\delta)n}{2}}\cdot{\frac{(1-\delta)n}{2}}\cdot\frac{\frac{n}{2}}{\left\lceil\frac{(1-\delta)n}{2}\right\rceil+\left\lceil\frac{\delta n}{2}\right\rceil-1}\cdot{\frac{2}{n}}}\right)=$$
    $$=\lim_{n\to\infty}\left(\sqrt{{\frac{(1-\delta)n}{2}}\cdot{\frac{2}{n}}}\right)=\lim_{n\to\infty}\left(1-\delta\right)=1-\delta.$$
    
    Таким образом, мы доказали, что 
    $$\lim_{n\to\infty}\left(\prod_{i=a(n)}^{a(n)+b(n)-1}\frac{2i-1}{2i}\right)=1-\delta.$$
    
    А это значит, что $\exists N'\in\mathbb{N}_0:$ $N'\ge 1,$ $\forall n\in\mathbb{N}_0:$ $n\ge N'$
    $$\prod_{i=a(n)}^{a(n)+b(n)-1}\frac{2i-1}{2i}>1-2\delta\ge 1-2\frac{\varepsilon}{2}=1-\varepsilon.$$
    
    Таким образом, мы доказали, что при наших $w\in\mathbb{YF}_\infty^+$ и $\varepsilon\in\mathbb{R}_{>0}$  $\exists N'\in\mathbb{N}_0,$ $\delta=\min\left(\frac{\varepsilon}{2},\frac{1}{2}\right)\in(0,1):$ $ \forall n\in\mathbb{N}_0,$ $v\in\mathbb{YF}:$ $n\ge N',$ $v\in P(w,n,\delta)$
    $$1-\varepsilon<\prod_{i=a(n)}^{a(n)+b(n)-1}\frac{2i-1}{2i}= {\displaystyle \prod_{i=1}^{\left\lceil\frac{\delta n}{2}\right\rceil} \frac{ g\left(2^{\left\lceil\frac{\delta n}{2}\right\rceil} 1^{2\left\lceil\frac{(1-\delta)n}{2}\right\rceil-1},i\right)-1}{g\left(2^{\left\lceil\frac{\delta n}{2}\right\rceil} 1^{2\left\lceil\frac{(1-\delta)n}{2}\right\rceil-1},i\right)}}\le\frac{\pi(v)}{\pi(w)},$$
что и требовалось.

Лемма доказана.
\end{proof}

Таким образом, по Утверждению \ref{pashaplohoi} и Лемме \ref{truefrick} при наших $w\in\mathbb{YF}_\infty^+$ и $\varepsilon\in\mathbb{R}_{>0}$ существуют $N=\max\left(N',N''\right)\in \mathbb{N}_0$ и $\delta\in(0,1)$: $\forall n\in\mathbb{N}_0,$ $v\in\mathbb{YF}:$ $n\ge N,$  $v\in P(w,n,\delta)$
$$1-\varepsilon<\frac{\pi(v)}{\pi(w)}<1+\varepsilon,$$
то есть
$$\pi(v)\in\left(\pi(w)(1-\varepsilon) ,\pi(w)(1+\varepsilon)\right),$$
то есть 
$$v\in R(w,n,\varepsilon).$$
    
    Таким образом, мы доказали, что при наших $w\in\mathbb{YF}_\infty^+$ и $\varepsilon\in\mathbb{R}_{>0}$ существуют такие $N\in \mathbb{N}_0$ и $\delta\in(0,1)$: $\forall n\in\mathbb{N}_0:$ $n\ge N$
    $$R(w,n,\varepsilon) \supseteq P(w,n,\delta).$$

Зафиксируем данные $N\in \mathbb{N}_0$ и $\delta\in\mathbb{R}_{>0}$.

По Теореме \ref{t1} при нашем $w\in\mathbb{YF}_\infty^+$ и только что зафиксированном $\delta\in\mathbb{R}_{>0}$
$$\lim_{n \to \infty}{\sum_{v\in P\left(w,n,\delta\right)}\mu_w(v)=1}.$$

Кроме того, ясно что

\begin{itemize}
    \item $\forall w\in\mathbb{YF}_\infty$ и $v\in\mathbb{YF}$
    $$\mu_w(v)=\lim_{m \to \infty} \frac{d(\varepsilon,v)d(v,w_m)}{d(\varepsilon,w_m)}\ge 0 ;$$
    \item $\forall w\in\mathbb{YF}_\infty,$ $n\in\mathbb{N}_0$ и $\varepsilon\in \mathbb{R}_{>0}$
    $$R(w,n,\varepsilon)=\left\{v\in\mathbb{YF}_n:\; \pi(v)\in(\pi(w)(1-\varepsilon),\pi(w)(1+\varepsilon)) \right\}\subseteq\mathbb{YF}_n;$$
    \item (Следствие \ref{mera1}) $\forall w\in\mathbb{YF}_\infty$ и $n\in\mathbb{N}_0$
    $${\sum_{v\in \mathbb{YF}_n}\mu_w(v)}=1.$$

\end{itemize}

А значит, при наших $w\in\mathbb{YF}_\infty^+,$ $\varepsilon\in\mathbb{R}_{>0}$ $\forall n\in\mathbb{N}_0:$ $n\ge N$
$${\sum_{v\in P(w,n,\delta)}\mu_w(v)}\le {\sum_{v\in R(w,n,\varepsilon)}\mu_w(v)} \le {\sum_{v\in \mathbb{YF}_n}\mu_w(v)}=1.$$

А значит, по Лемме о двух полицейских, 
$$\lim_{n \to \infty}{\sum_{v\in R(w,n,\varepsilon)}\mu_w(v)}=1,$$
что доказывает второй пункт.

Также заметим, что
\begin{itemize}
    \item $$\overline{R}\left(w,n,\varepsilon\right)\cup R\left(w,n,\varepsilon\right)=$$
    $$=\left\{v\in\mathbb{YF}_n: \pi(v)\notin(\pi(w)(1-\varepsilon),\pi(w)(1+\varepsilon)) \right\}
    \cup \left\{v\in\mathbb{YF}_n: \pi(v)\in(\pi(w)(1-\varepsilon),\pi(w)(1+\varepsilon)) \right\}
    =\mathbb{YF}_n;$$
    \item $$\overline{R}\left(w,n,\varepsilon\right)\cap R\left(w,n,\varepsilon\right)=$$
    $$=\left\{v\in\mathbb{YF}_n: \pi(v)\notin(\pi(w)(1-\varepsilon),\pi(w)(1+\varepsilon)) \right\}\cap \left\{v\in\mathbb{YF}_n: \pi(v)\in(\pi(w)(1-\varepsilon),\pi(w)(1+\varepsilon)) \right\}
    =\varnothing;$$
    \item (Следствие \ref{mera1}) $\forall w\in\mathbb{YF}_\infty$ и $n\in\mathbb{N}_0$
    $${\sum_{v\in \mathbb{YF}_n}\mu_w(v)}=1.$$
\end{itemize}
Таким образом, можно сделать вывод, что
$$\lim_{n \to \infty}{\sum_{v\in \overline{R}(w,n,\varepsilon)}\mu_w(v)}=0,$$
что доказывает первый пункт.

Таким образом, оба пункта доказаны.

Следствие доказано.

\end{proof}
\newpage

\section{Благодарности}

\begin{itemize}
    \item Работа поддержана грантом в форме субсидий из федерального бюджета на создание и развитие международных математических центров мирового уровня, соглашение между МОН и ПОМИ РАН № 075-15-2019-1620 от 8 ноября 2019 г., а также грантом фонда поддержки теоретической физики и математики "БАЗИС", договор No 19-7-2-39-1 от 1 сентября 2019 г.
    \item Мы признательны Фёдору Владимировичу Петрову за постановку задачи, помощь в публикации статьи и моральную поддержку на протяжении всего периода работы, а также Павлу Андреевичу Ходунову за проявленное при проверке доказательства терпение.
\end{itemize}

\newpage

\end{document}